\documentclass{article}
\usepackage{geometry}
\usepackage{graphicx} 
\usepackage{amsfonts}
\usepackage{amsmath}
\usepackage{amssymb}
\usepackage[bookmarksnumbered=true, hidelinks]{hyperref}
\usepackage[T1]{fontenc}
\usepackage[utf8]{inputenc}
\usepackage{amsthm}
\usepackage{enumerate}
\usepackage[shortlabels]{enumitem}
\usepackage{authblk}
\usepackage{cite}
\usepackage{comment}

\usepackage{microtype}

\theoremstyle{plain}
\newtheorem{thrm}{Theorem}[section]
\newtheorem{lmm}[thrm]{Lemma}
\newtheorem{crllr}[thrm]{Corollary}
\newtheorem{prpstn}[thrm]{Proposition}
\theoremstyle{definition}
\newtheorem{dfntn}[thrm]{Definition}
\newtheorem{nt}[thrm]{Note}
\newtheorem{cnjctr}[thrm]{Conjecture}
\newtheorem{xmpl}[thrm]{Example}

\def\j{\mathbf{j}}
\def\kk{\mathbf{k}}
\def\ll{\mathbf{l}}

\title{  \bfseries{ \textsc{ Weights of strongly nilpotent special multi-flags}}}
\author{ {\large \scshape Bartłomiej Sikorski}\\
{\small Faculty of Physics, University of Warsaw, Pasteura 5, 02-093 Warsaw, Poland}}
\date{}

\begin{document}

\maketitle

\begin{abstract}
   This paper is devoted to two distinguished families of distributions: special multi-flags and their lower-rank counterparts, Goursat distributions. It is known that these two families are weakly nilpotent but strongly nilpotent only at special points. Local classification of these objects is still open and only recently singularity classes of special multi-flags were constructed by Mormul. In this paper, we study special multi-flags in the homogeneous case, which is also strongly nilpotent. The main contribution of the present work is to show that the strongly nilpotent germs of multiflags from different singularity classes are pairwise inequivalent as the singularity type uniquely determines the weights of coordinates. In the Goursat case, explicit formulas for weights of tangential geometric classes are presented.  Finally, we discuss the property of strong nilpotency among special multi-flags and state open conjectures.
\end{abstract}

\section{Introduction}

Nonholonomic systems occupy a special place at the meeting point of geometric control theory, sub-Riemannian geometry, and the local theory of differential systems. A configuration of such a system is described by a point $x$ on a smooth manifold $M$ and the system is governed by a distribution $D \subset TM$, whose sections describe the admissible infinitesimal motions, $\dot x(t) \in D_{x(t)}$.
Directions outside $D$, when they can be recovered, appear through Lie brackets; thus accessibility is reflected in the small Lie flag 
generated from $D$\footnote{
See \eqref{def_small_flag} for the definition of a small Lie flag.
}. In the totally nonholonomic, or bracket-generating, case at each point $x\in M$ there exists a finite degree of nonholonomy $d_{NH}(x)$ such that $D_{d_{NH}(x)} = TM$ around $x$.

This bracket-generating mechanism is one of the basic links between nonlinear control theory and geometry. In control theory it is tied to accessibility and local controllability through Lie-bracket criteria \cite{Sussmann1983,Sussmann1987}. In motion planning, the number and arrangement of brackets needed to recover the missing directions influence the local complexity of steering; this is one reason why nilpotent bases and nilpotent approximations play such a central role \cite{Hermes1984NilpotentBF,Lafferriere1993}. In sub-Riemannian geometry the same structure appears through privileged coordinates, weights, and tangent nilpotent models \cite{Bellaiche96,Montgomery2002}.

A particularly transparent example is the classical system of a car pulling trailers. Jean showed that its singular configurations are detected by changes in the degree of nonholonomy and in the associated Lie-bracket growth \cite{Jean_1996}. Thus what appears mechanically as a degeneracy of configurations becomes, geometrically, a stratification problem for rank-two distributions.
The geometric objects abstracted from that example are the Goursat distributions. 
 They are characterized by the slowest possible regular growth of the derived, or big, flag, which is $2,3,4,\ldots,\dim M$ at each point.
They form the first and most thoroughly studied family in a larger hierarchy. Their local theory revealed that even such apparently rigid objects possess a rich supply of singularities. Kumpera-Ruiz normal forms \cite{KumperasurlequivalencedePfaff}, Jean’s stratification coming from the $n$-trailer system \cite{Jean_1996}, and the later geometric interpretation of Montgomery and Zhitomirskii \cite{Montgomery2001GeometricAT} showed that \emph{Goursat germs}
naturally split into classes, each carrying its own small growth vector and its own nonholonomic behaviour.

Here and throughout the paper, the word \emph{germ} emphasizes that the
classification problem is local: distributions are compared in
neighborhoods of distinguished points, up to local diffeomorphisms. This is
the standard viewpoint in the local theory of singular distributions, as in \cite{Zhitomirskii1991NormalFO,Zhitomirskii1995SingularitiesAN}.

Goursat distributions, however, are only the width-one case. The natural higher-width analogues are the special multi-flags. For a fixed integer $m\geq 1$, a special $m$-flag is generated by a rank-$(m+ 1)$ distribution $D$ whose derived flag grows in ranks
$m+1,2m+1,3m+1,\ldots.$
Together with the characteristic and involutivity conditions in the definition, this controlled growth makes the class rigid enough to admit explicit normal forms, yet flexible enough to contain nontrivial singularities.
These objects arise naturally from generalized Cartan prolongations and are closely related to differential systems \cite{Kumpera2002MultiflagSA,SHIBUYA2009793,Mormul2004MultidimensionalCP}.

A decisive feature of both Goursat distributions and special multi-flags is weak nilpotency: locally, one can choose generators whose real Lie algebra is nilpotent. This property is important in control theory because nilpotent systems are the model systems for local motion planning and for first-order nonholonomic approximation. Yet weak nilpotency does not say that the original distribution is already equivalent to its nilpotent approximation. The latter, stronger property is strong nilpotency. Strongly nilpotent germs are, in this sense, the points where the local geometry is exactly captured by its nilpotent model rather than merely approximated by it.

The present paper studies strongly nilpotent special multi-flags through their weights. Given a completely nonholonomic distribution, the small Lie flag determines adapted coordinates and assigns weights to the coordinate directions: a coordinate has weight $s$ if its direction first appears at the $s$-th step of the small flag. These weights are the bookkeeping device behind the nilpotent approximation. They also determine the small growth vector, and hence provide diffeomorphism invariants of the germ. The guiding question is therefore simple to state: how much of the singularity class of a strongly nilpotent special multi-flag is remembered by its sequence of weights?

The main conclusion of this paper is that, on the homogeneous (hence also strongly nilpotent) locus, the singularity code is visible in the weight structure.
More precisely, fix a width $m$ and a length $r$. We prove that two
homogeneous EKR normal forms of special $m$-flags with different lujr
codes have different sequences of weights. These weights determine the
small growth vector in the strongly nilpotent normal form; hence the
corresponding germs have different small growth vectors and are not
locally diffeomorphic. Thus, distinct singularity classes are pairwise
inequivalent within the homogeneous strongly nilpotent locus.

This result continues the local classification programme for multi-flags
developed in \cite{Mormul2004MultidimensionalCP, Mormul_Pelleter_2flags,
Mormul_2023_multi}. The recent construction of singularity classes for
special multi-flags gives geometric meaning to the EKR codes. Here we show
that, after restricting to homogeneous EKR normal forms, these codes are
separated by the associated weight sequences. The proof is based on
recurrence relations for the weights of coordinates read directly from the
EKR construction. In the Goursat case, that is, for $1$-flags, we also
derive explicit formulas for the weights of tangential classes, building
on Jean's analysis of the $n$-trailer system \cite{Jean_1996} and
Mormul's arithmetic description of small growth vectors \cite{SGV_Mor_2004}.

The paper is organized as follows. Section \ref{sec:2} reviews preliminary concepts in the theory of nonholonomic distributions. In Section \ref{sec:3}, we describe normal forms and geometric classes of special multi-flags. In Section \ref{sec:4} we prepare the grounds to present Theorem \ref{main_theorem}, the main result of this work - the inequivalence of strongly nilpotent normal forms, and hence also their geometric classes. Section \ref{sec:5} presents new explicit formulas for weights of coordinates of tangential (strongly nilpotent) Goursat flags. We close with Section \ref{sec:6}, outlining open problems, in particular, if all strongly nilpotent multi-flags are of the form we consider.  Appendix \ref{app:combi} answers a combinatorial question about the number of singularity classes of $m$-flags. Appendix \ref{app:proof} is devoted to the proof of Theorem \ref{main_theorem_rephrased} necessary to show our main result. Appendix \ref{app:recursion} contains proofs of Theorem \ref{An_compact_form_of_thm2} and Theorem \ref{thm:sums_of_Goursats}.

\section{Preliminaries}\label{sec:2}

\subsection{Lie flags}
Distributions that we study in this paper will be characterized by their following flags and their dimensions.
\begin{dfntn}
    For a given totally nonholonomic (bracket generating) distribution $D$ on a manifold $M$, we introduce its \emph{nonholonomy degree}\footnote{Also called \emph{nilpotency order} \cite{Lafferriere1993}.} $d_{NH}$ and its \emph{small Lie flag} $(D_1,D_2,\hdots,D_{d_{NH}})$\footnote{Here, and throughout this paper equalities of distributions will be understood in the sense of germs of distributions at a distinguished point.}
    \begin{align}
    D_1:&=D,\notag\\
    D_{j+1}:&=D_j+[D,D_j] \quad \text{for }j=1,2,\hdots,d_{NH}-1,\label{def_small_flag}\\
    D_{d_{NH}}:&=TM.\notag
\end{align}
\end{dfntn}
\begin{dfntn}
To every point $q\in M$, there is attached a \emph{small growth vector}(sgrv) of the distribution $D$\footnote{In general, the small growth vector is not constant on $M$, but for the local classification of distributions and their singularities we shall consider germs at a point $q$, usually taken to be $0\in \mathbb{R}^n$.} - the sequence of dimensions defined by the growth of the small Lie flag at a point 
\begin{align}
    \text{sgrv}_q =[\dim D_1(q),\hdots,\dim D_{d_{NH}-1}(q),\dim D_{d_{NH}}(q)].
\end{align}
\end{dfntn}
The small growth vector encodes the nonholonomy of the distribution, that is how fast it reaches the full tangent bundle.
\begin{dfntn}
Given a totally nonholonomic distribution $D$ we can also define its \emph{big Lie flag} around a point $q\in M$
\begin{align}\label{eqn:bigflagdef}
    D^r:&=D,\notag\\
    D^{j-1}:&=D^j+[D^j,D^j] \quad \text{for }j=r,r-1,\hdots,1,\\
    D^{0}:&=TM.\notag
\end{align}
\end{dfntn}
Clearly, the elements of the big flag grow at least as fast as elements of the small flag, that is $ D_j\subset D^{r+1-j}$ due to Jacobi identity.
\begin{dfntn}
For every $q\in M$, we also define \emph{big growth vector} of a distribution as a sequence of dimensions of the big Lie flag
\begin{align}\label{def:biggrowvector}
    \text{bgrv}_q=[\dim D^r(q), \hdots, \dim  D^{0}(q)].
\end{align}
\end{dfntn}
As we shall see in the next sections, the equality of big growth vectors does not imply the equality of small growth vectors. Hence, the small growth vector provides much finer stratification of distributions.

\subsection{Weights}
Given a distribution with a sgrv: $ \text{sgrv}_q=(n_1,n_1+n_2,\hdots,n_1+n_2+\hdots+n_{d_{NH}})$\footnote{Note that $n_1=d$.}, one can decompose the tangent space at a point $q\in M$ as 
\begin{align}
    T_qM= \mathbb{R}^{n_1}\oplus\hdots \oplus \mathbb{R}^{n_{d_{NH}}}.
\end{align}

Any point $q\in M$ has a neighborhood $O_q\ni q$ for which there exist an \emph{adapted coordinate map} (\cite{AgrachevMarigo03}, Theorem 3.1), a coordinate chart  $\chi:O_q\rightarrow \mathbb{R}^n$ such that \begin{align}\label{eq:adaptedcoordinates}    \chi(q)=0,\ \quad  \chi_*(D_j(q))=\mathbb{R}^{n_1}\oplus\hdots \oplus \mathbb{R}^{n_j},\ j=1,2,\hdots,d_{NH}.\end{align}
Here $n_j=\dim(D_j|_q/D_{j-1}|_q)$, so $\dim D_j|_q=n_1+\cdots+n_j$.

One can write any $x\in T_qM$ in terms of \emph{adapted coordinates} \cite{AgrachevMarigo03} as
\begin{align}\label{adapted_coordinate}
    x=(x_1,\hdots x_{d_{NH}}) \quad x_i =(x_{i1},\hdots,x_{in_i})\in\mathbb{R}^{n_i} ,i=1,\hdots d_{NH}.
\end{align}
We will use coordinates adapted to the small Lie flag of the distribution $D$.
\begin{dfntn}
    For a completely nonholonomic distribution $D$ on $M$, coordinates \eqref{adapted_coordinate} around
$q\in M$ are said to be \textit{linearly adapted} at $q$ if 
\begin{align}
D_j|_q=(\partial_1,\hdots,\partial_{n_1+\hdots+n_j})\qquad (j=1,\dots,d_{NH}).
\end{align} Here and in what follows, we omit the writing “span” before a set of vector field generators. 
\end{dfntn}

Given linearly adapted coordinates at $q$, we assign weights to coordinate functions by declaring
\begin{align}\label{weight_def}
    w(x_{ij})=i \quad\text{if } \quad  \frac{\partial}{\partial x_{ij}}\in D_i|_q\setminus D_{i-1}|_q,
\end{align}
so that the first $n_1=d$ coordinates have weight $1$, the next $n_2$ coordinates have weight $2$,
and so on up to $d_{NH}$.
Equivalently, the weight of a coordinate direction is the smallest step of the \emph{small} Lie flag
in which that direction appears at $q$.

We extend $w$ to monomial differential operators by
\begin{align}
    w\!\left(x^\beta \frac{\partial^{|\alpha|}}{\partial x^\alpha}\right)
=\sum_{i=1}^{d_{NH}} i\big(|\beta_i|-|\alpha_i|\big),
\end{align}
where $\alpha,\beta$ are multi-indices grouped according to the splitting
$T_qM=\mathbb{R}^{n_1}\oplus\cdots\oplus\mathbb{R}^{n_{d_{NH}}}$.
For a vector field $|\alpha|=1$, all components of non-zero vector fields have weights $w_{\mathrm{VF}}\geq -d_{NH}$. Vector fields belonging to the distribution at $q$ act only at the coordinates of weight $1$, so $w_{D}\geq -1$.

The weight of a coordinate or differential operator is related to the action of one parameter group of dilation associated with $D$ by $D\ni X\mapsto \delta_\lambda(X) = \lambda^{-1}X$ or in terms of coordinate functions
\begin{align}
    \delta_\lambda (x_1,x_2,\hdots,x_{d_{NH}})=(\lambda x_1,\lambda^2 x_2,\hdots,\lambda^{d_{NH}}x_{d_{NH}}).
\end{align}
For a polynomial differential operator, dilations act by
\begin{align}
     \delta_\lambda \left (x^\beta\frac{\partial^{|\alpha|}}{\partial x^\alpha}\right)=\lambda ^{\sum_{i=1}^{d_{NH}}i(|\beta_i|-|\alpha_i|)}x^\beta\frac{\partial^{|\alpha|}}{\partial x^\alpha}=\lambda ^{w}x^\beta\frac{\partial^{|\alpha|}}{\partial x^\alpha}.
\end{align}
We denote weights of linearly adapted coordinates $(x_1,\hdots,x_{n_1},\hdots,x_{n_1+\hdots+n_{d_{NH}}})$, by $w_k:=w(x_k)$ such that
\begin{align}
   w_1=\hdots=w_{n_1}=&1, \notag\\
   w_{n_1+1}=\hdots=w_{n_1+n_2}=&2,\\
   w_{n_1+\hdots+n_j+1}=\hdots=w_{n_1+\hdots+n_j+n_{j+1}}=&j+1\notag .
\end{align}
We will apply weight to vector fields of the distribution $D$. We see that the weight is additive with respect to the multiplication of vector fields, therefore, it will be superadditive with respect to the Lie bracket.
\begin{prpstn}
    Suppose vector fields $X$ and $Y$ consist of homogeneous terms with weights $\geq k$ and $\geq l$, respectively, at $q$. Then $[X, Y]$ consists of terms of the weights $\geq k + l$ at $q$.
\end{prpstn}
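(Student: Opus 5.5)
We argue term by term and reduce everything to a single pair of monomial vector fields. Write $X=\sum_{\mu} X_\mu$ and $Y=\sum_{\nu} Y_\nu$, where each $X_\mu=a_\mu\, x^{\beta}\partial_{x_{i}}$ and $Y_\nu=b_\nu\, x^{\gamma}\partial_{x_{j}}$ is a monomial vector field with $w(X_\mu)\geq k$ and $w(Y_\nu)\geq l$. Here $w(x^\beta\partial_{x_i})=\sum_s s|\beta_s| - w_i$, as in the extension of $w$ to monomial differential operators above. The bracket is bilinear, so $[X,Y]=\sum_{\mu,\nu}[X_\mu,Y_\nu]$. It therefore suffices to show that the bracket of two monomial vector fields of weights $p$ and $q$ is a combination of monomial vector fields of weight exactly $p+q$ (or vanishes). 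Summing over all pairs then gives only terms of weight $\ge k+l$. If $X,Y$ are only formal or Taylor series, the same argument applies degree by degree, since each homogeneous component of the bracket receives contributions from finitely many pairs.

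For the monomial computation, write
\[
[x^\beta\partial_{x_i},\,x^\gamma\partial_{x_j}] = x^\beta\,\partial_{x_i}(x^\gamma)\,\partial_{x_j} - x^\gamma\,\partial_{x_j}(x^\beta)\,\partial_{x_i}.
\]
Consider the first term. If $\gamma$ contains $x_i$, then $\partial_{x_i}(x^\gamma)$ is a constant times $x^{\gamma-e_i}$, whose weight is $w(x^\gamma)-w_i$. The resulting monomial field $x^{\beta+\gamma-e_i}\partial_{x_j}$ then has weight
\[
w(x^\beta)+w(x^\gamma)-w_i-w_j=\bigl(w(x^\beta)-w_i\bigr)+\bigl(w(x^\gamma)-w_j\bigr)=p+q .
\]
If $\gamma$ does not contain $x_i$, the term is zero. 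The second term is handled symmetrically. The key point is that weight is additive under multiplication of monomials, and differentiation by $x_i$ lowers it by exactly $w_i$. This is the \emph{additivity} already noted in the text: the bracket of operators is their commutator, and composition of operators adds weights. Equivalently, one can use the dilations $\delta_\lambda$. A field is homogeneous of weight $p$ iff $\delta_\lambda Z=\lambda^{p}Z$, and $\delta_\lambda$ is an automorphism of the Lie algebra of vector fields (it is induced by a diffeomorphism). Hence $\delta_\lambda[X_\mu,Y_\nu]=[\lambda^pX_\mu,\lambda^qY_\nu]=\lambda^{p+q}[X_\mu,Y_\nu]$. I would present this dilation argument as the main line, with the explicit monomial computation as a check.

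The only step needing care is the passage from monomials to the full vector fields. The terms of $X$ and $Y$ may have arbitrary weights $\ge k$ and $\ge l$, and cancellations in the bracket can only remove terms, never lower a weight. So the minimal weight appearing in $[X,Y]$ is at least the minimum of $p+q$ over contributing pairs, and this is $\geq k+l$. I do not expect a real obstacle here. The one subtle point is that weights are defined relative to linearly adapted coordinates at $q$. The expansions must therefore be taken in a fixed such chart, and the claim is about the Taylor expansion at $q$, which is exactly the setting of the statement.
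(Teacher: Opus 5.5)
Your proposal is correct and follows essentially the same route as the paper, which justifies the proposition only by the one-line remark that weight is additive under multiplication (composition) of operators and hence superadditive under the commutator. Your bilinear reduction to monomial fields, with the computation $w(x^{\beta+\gamma-e_i}\partial_{x_j})=p+q$, is a fleshed-out version of that remark; the $\delta_\lambda$-dilation argument is an equivalent reformulation, and your degree-by-degree treatment of Taylor series covers the point the paper leaves implicit.
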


\subsection{Nilpotent distributions}

Following \cite{Bellaiche96} and \cite{Mormul2005KRalgebrasoptimal}, we will decompose vector fields in terms of weights \eqref{weight_def}. For that, we shall use adapted coordinates \eqref{adapted_coordinate}. One can see that a vector field $X\in \mathrm{Sec}(D)$ must contain in its expansion monomials of the form $\partial_i,$ with $i\leq d$ which are of the weight $-1$. Another condition is imposed on monomials of the form $a^j\partial_j$, with $j>d$, due to the nonholonomic nature of the distribution. The precise statement of this fact is given below. 
\begin{prpstn}
 Every smooth vector field $X\in D$ has only terms of weights not smaller than
$-1$ in its Taylor expansion in arbitrary coordinates adapted for the germ of $D$. These terms can
be grouped in homogeneous summands
    \begin{align}\label{homogeneous_summands}
    X = X^{(-1)}+X^{(0)}+X^{(1)}+\hdots,
\end{align}
where $X^{(s)}$ is the homogeneous vector field of weight $s$.  
\end{prpstn}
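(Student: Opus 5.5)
The plan is to expand each component of $X$ in Taylor series in the linearly adapted coordinates and use the defining property of adapted coordinates: a function $x_k$ of weight $w_k$ vanishes to order $w_k$ along $D$, i.e.\ all derivatives $Y_1\cdots Y_j x_k(q)=0$ for $Y_i\in\mathrm{Sec}(D)$ and $j<w_k$ (this is the characterization of adapted coordinates in \cite{AgrachevMarigo03, Bellaiche96}). Write $X=\sum_k a^k\partial_k$ with $a^k=Xx_k$. A monomial $x^\beta\partial_k$ has weight $w(x^\beta)-w_k$, so the claim "all weights $\ge -1$" is equivalent to saying that for every $k$ the function $a^k=Xx_k$ has nonholonomic order at least $w_k-1$ at $q$, i.e.\ its Taylor expansion contains only monomials $x^\beta$ with $w(x^\beta)\ge w_k-1$.

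The first step is to show that the nonholonomic order of a monomial $x^\beta$ is at least $w(x^\beta)$ and that a smooth function has order $\ge s$ if and only if its Taylor series contains only monomials of weight $\ge s$. This is the standard privileged-coordinates fact: the weights are sub-additive under products and each $x_k$ has order exactly $w_k$. I will use it as given from the adapted-coordinate theory cited above. The direction actually needed is "order $\ge s$ $\Rightarrow$ only monomials of weight $\ge s$", and this requires the coordinates to be privileged, not merely linearly adapted. This is the main obstacle, and I would handle it by working in the adapted charts of \eqref{eq:adaptedcoordinates}, which are privileged by \cite{AgrachevMarigo03}, Theorem 3.1.

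Second, since $x_k$ has order $w_k$, applying one field $X\in D$ lowers the order by at most one: if $Y_1\cdots Y_j(Xx_k)(q)\neq0$, then $Y_1\cdots Y_jX$ is a differential operator of $D$-length $j+1$ that is nonzero on $x_k$, so $j+1\ge w_k$. Hence $Xx_k$ has order $\ge w_k-1$, and every monomial $x^\beta\partial_k$ in $X$ has weight $\ge -1$.

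Finally, grouping the Taylor monomials by their weight gives the homogeneous summands $X^{(s)}$, $s\ge -1$. This is a finite sum for each $s$, since only finitely many monomials have a given weight, so each $X^{(s)}$ is a polynomial vector field, homogeneous under $\delta_\lambda$ of degree $s$. This yields \eqref{homogeneous_summands}.
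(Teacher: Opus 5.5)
Your argument is correct and is the standard Bellaïche argument that the paper relies on by citation (it gives no proof of its own): $Xx_k$ has nonholonomic order at least $w_k-1$, and in privileged coordinates the order of a function equals the lowest weight in its Taylor expansion. Your caveat about privileged coordinates is right and should stay explicit. The linear condition \eqref{eq:adaptedcoordinates} alone does not suffice: for the Engel germ $\partial_1,\ \partial_2+x_1\partial_3+\tfrac12 x_1^2\partial_4$, replacing $x_4$ by $x_4+x_1^2$ turns $\partial_1$ into $\partial_1+2x_1\partial_4$, which contains a term of weight $-2$. So privilegedness is the correct reading of ``adapted'' in the hypothesis, not a consequence of that equation.
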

We will denote by $\widehat{X}$ the lowest homogeneous component\footnote{Also called nilpotent, property required by the weight $-1$ as we will see in the Proposition \ref{prop:nilpotentappnilp}}, so $\widehat{X}:=X^{(-1)}$. We define analogous elements for the entire distribution.
\begin{dfntn}\label{def_nilpotent_approximation}
    Given a distribution $D\overset{loc.}=(X_1,\hdots,X_d)$ \footnote{As before, $(\hdots)$ denotes $\text{span}(\hdots)$.}, we define new distribution $\widehat{D}=(\widehat{X_1},\hdots,\widehat{X_d})$ which is called the \emph{nilpotent approximation} of $D$  at $q$.
\end{dfntn}
 For the proof that $\widehat{D}$ is well-defined independently of the choice of adapted coordinates, we refer to \cite{Bellaiche96}.
The most important property of the nilpotent approximation is that it preserves the nonholonomic structure of the distribution encoded in the small growth vector \cite{Bellaiche96}.
\begin{prpstn}\label{prop:nilpotentappnilp}
    The nilpotent approximation $\widehat{D}$ of $D$ has the same small growth vector at $q$ as $D$ (and hence the same degree of nonholonomy $d_{NH}$) and the real Lie algebra generated by $\widehat{X_1},\hdots,\widehat{X_d}$ is a nilpotent Lie algebra of the nilpotency order\footnote{The nilpotency order of a Lie algebra is the minimal number of multiplications in it yielding always
zero.} $d_{NH}$.
\end{prpstn}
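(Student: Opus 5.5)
The plan has two parts. First I show that the weight-$(-1)$ components generate a nilpotent Lie algebra of order at most $d_{NH}$. Then I show that the small flag of $\widehat D$ at $q$ has the same dimensions as that of $D$.

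\textbf{Nilpotency.} I work in linearly adapted coordinates and use the grading by weight. Each $\widehat{X_i}$ is a polynomial vector field, homogeneous of weight $-1$. By additivity of weights under products of monomials, the bracket of homogeneous fields of weights $k$ and $l$ is homogeneous of weight $k+l$, which is the exact form of the superadditivity proposition above. Hence an iterated bracket of length $s$ in the $\widehat{X_i}$ is homogeneous of weight $-s$. By the remark after the definition of weights, every nonzero vector field component has weight $\geq -d_{NH}$: a monomial $x^\beta\partial_j$ has weight $\geq -w_j \geq -d_{NH}$. So every bracket of length $s>d_{NH}$ vanishes identically. The real Lie algebra generated by the $\widehat{X_i}$ is therefore nilpotent, of order at most $d_{NH}$; equality will follow once the growth vector is shown to agree.

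\textbf{Growth vector.} I prove by induction on $j$ that
\[
D_j|_q=\widehat D_j|_q=(\partial_1,\dots,\partial_{n_1+\dots+n_j}).
\]
For an iterated bracket $B=[X_{i_1},[\dots,X_{i_s}]]$ of length $s$, I expand each $X_i=\widehat{X_i}+(\text{weight}\geq 0)$. By superadditivity, $B=\widehat B+R$, where $\widehat B$ is the same bracket of the $\widehat{X_i}$ (weight $-s$) and $R$ has weight $\geq -s+1$.

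Evaluating at $q=0$ kills every monomial with $|\beta|>0$. Thus only terms $c\,\partial_j$ survive, and these have weight $-w_j$. Consequently $\widehat B(0)$ lies in the span of the $\partial_j$ with $w_j=s$, while $R(0)$ lies in the span of the $\partial_j$ with $w_j\leq s-1$. In particular $\widehat{D}_s|_q\subset(\partial_j: w_j\leq s)$.

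For the reverse inclusion I use that $D_s|_q$ is spanned by values of brackets of length $\leq s$. By the definition of the weights, the values of length-$s$ brackets must produce the directions of weight exactly $s$ modulo $D_{s-1}|_q$. Modulo lower-weight directions, these values are exactly the $\widehat B(0)$. So the $\widehat B(0)$ span the weight-$s$ block $\mathbb{R}^{n_s}$. Inductively, $\widehat D_s|_q=D_s|_q$, and the small growth vectors agree. Since $\widehat D_{d_{NH}-1}|_q\neq T_qM$, some bracket of length $d_{NH}$ is nonzero, so the nilpotency order is exactly $d_{NH}$.

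\textbf{Main obstacle.} The delicate point is the projection step: showing that the top-weight part of $B(0)$ modulo $D_{s-1}|_q$ is exactly $\widehat B(0)$. This requires the coordinates to be linearly adapted, so that the $\partial_j$ of weight $<s$ span $D_{s-1}|_q$. I would handle it by choosing privileged coordinates as in \cite{Bellaiche96}, or simply cite that reference for the independence of the choice.
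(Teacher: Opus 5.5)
Your proposal is correct and is essentially the standard graded argument of Bella\"{\i}che, which is all the paper relies on here: it gives no proof of its own and simply cites \cite{Bellaiche96}. One correction to your ``main obstacle'': the projection step needs only linear adaptation. The step that genuinely requires privileged coordinates is the expansion $X_i=\widehat{X_i}+(\text{weight}\geq 0)$, because in merely linearly adapted coordinates fields of $D$ can have components of weight below $-1$. For example, for the Engel fields $\partial_1$, $\partial_2+x_1\partial_3+\tfrac12 x_1^2\partial_4$, the linearly adapted change $x_4\mapsto x_4+x_1^2$ turns $X_1$ into $\partial_1+2x_1\partial_4$, whose second term has weight $-2$. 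So your fallback of choosing privileged coordinates, which are automatically linearly adapted, is exactly what makes the argument go through.
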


Now we introduce a central property of strong nilpotency (introduced in \cite{Agrachev2001ONTS}) that we exploit in this paper.
\begin{dfntn}
    A distribution $D$ is said to be \emph{strongly nilpotent} at a point $q$ if its germ at $q$ is equivalent to its nilpotent approximation $\widehat{D}$ at $q$.
\end{dfntn}
Nilpotent approximations and strongly nilpotent distributions play an important role in sub-Riemannian geometry where those appeared for the first time \cite{Bellaiche96, Agrachev2001ONTS}.

Strong nilpotency allows one to study the nilpotent approximation $\widehat{D}$ instead of the distribution $D$ itself. In such cases, the nilpotent approximation is much more than merely an approximation in the same class of small growth vectors. Operating on nilpotent approximations is convenient as those are independent of the choice of adapted coordinates \cite{Bellaiche96} and one can work with a more transparent model of distribution with a given small growth vector.

There is also a weaker notion of nilpotency.
\begin{dfntn}\label{def:weakly_nilpotent}
    A distribution $D$ is said to be (locally) \emph{weakly nilpotent} at a point $q$ if $D$ possesses a basis
that generates a nilpotent real Lie algebra of vector fields locally near $q$.
\end{dfntn}

The relation between strong and weak nilpotency was first discussed in \cite{Hermes1984NilpotentBF}.
Those two notions are not equivalent. It is worth noting that by Proposition \ref{prop:nilpotentappnilp}, generators of $\hat{D}$ generate the nilpotent Lie algebra, therefore the strong nilpotency implies the weak nilpotency. The converse does not hold.
 There are counterexamples among, seemingly regular, Goursat distributions, given in \cite[Theorem 3]{MomrulGoursatdim7}

\section{Multi-flags and their normal forms}\label{sec:3}

Special $m$-flags were introduced by Kumpera and Rubin in \cite{Kumpera2002MultiflagSA}, and more appropriately defined in \cite{SHIBUYA2009793, Adachi_Jiro}. 
Singularities of special multi-flags were studied by different means including methods of algebraic geometry based on a generalization of deep curve-singularities \cite{CASTRO12, CASTRO17_2, CASTRO17_1} resulting in a stratification different than the one we consider, which is due to Mormul \cite{Mormul_2009Singularity2flags}. We develop approaches focused on strong nilpotency properties of special $m$-flags \cite{Mormul_Pelleter_2flags}.
We shall follow definitions and notation of \cite{Mormul2004MultidimensionalCP, Mormul_2009Singularity2flags, Mormul_2023_multi}, to which we refer for more details.
First, we recall the definition of special multi-flags.
\begin{dfntn}\label{Multi_flag_definition}
    \emph{Special $m$-flag} is a big flag of a distribution such that its big growth vector takes the form
    \begin{align}
        \text{bgrv}_q=(\dim D^r, \hdots, \dim  D^{0})=(m+1,2m+1,\hdots, rm+1,(r+1)m+1),
    \end{align}
    and $D^1$ possesses a corank 1 involutive subdistribution $F$ ($[F,F]\subset F\subset D^1$).
\end{dfntn}

It is often assumed that, for a special multi-flag, the Cauchy characteristics module $L(D^j)\subset D^{j+1}$ for $j=1,2,\hdots,r-1$. However, this is implied by the two conditions given in Definition \ref{Multi_flag_definition} (see Proposition 1.3 in \cite{Adachi_Jiro} or Corollary 6.3 in \cite{SHIBUYA2009793}).

\subsection{Singularity classes of multi-flags}
In this section, we review the classification of multi-flags and construct normal forms visualizing flags from a given class based on \cite{Mormul_2023_multi} which was rooted in the previous work on normal forms and 2-flags \cite{Mormul2004MultidimensionalCP, Mormul_2009Singularity2flags}.
Singularity classes of $m$-flags, also called \emph{Extended Kumpera Ruiz classes} extending definition of classes of Goursat flags, will be labeled by words over the alphabet
$$ \{1,2,\hdots, m,m+1\}$$
constrained by conditions under the name \emph{least upward jumps rule} (lujr) \cite{Mormul_2023_multi}.
\begin{dfntn}\label{def_lujr}
    A code made of letters $ \{1,2,\hdots, m,m+1\}$ satisfies the \emph{least upward jumps rule} if it starts with $1$, and any letter $l>1$ can appear only if the letter $l-1$ appeared before. Equivalently, $j_1j_2\hdots j_r$ is lujr if $j_1=1$ and $j_{l+1}\leq 1+\max(j_1,\hdots, j_l) $ for $l=1,2,\hdots, r-1$.
\end{dfntn}
To clarify the definition, we note that the code $1 \ 2 \ 1$ is lujr, while $1 \ 3 \ 1$ is not because of the appearance of the letter $3$ which is not preceded by the letter $2$.

In Appendix \ref{app:combi}, we derive the number of codes (and hence the number of singularity classes) of length $r$ 
    \begin{align}
       N(m,r)=\sum_{k=0}^{m}\left\{\frac{(m+1-k)^r}{(m+1-k)!}\cdot \sum_{n=0}^k\frac{(-1)^n}{n!}\right\}. 
    \end{align}

Let us now describe what singularity classes are. We start by describing classes of $2-$flags which is the simplest example of multi-flags and were classified first \cite{Mormul_2009Singularity2flags}. To each distribution we shall assign the word over the alphabet $\{1,2,3\}$. Later we will outline how this procedure generalizes to higher ranks.

The conditions from Definition \ref{Multi_flag_definition} lead to the following sandwich diagram \cite{Montgomery2001GeometricAT}
\begin{align}
 \begin{array}{@{}ccccccccccccc}
TM = D^0 & \supset & D^1 & \supset & D^2 & \supset & \cdots & \supset
& D^{r-1} & \supset & D^r & & \\
& & \cup & & \cup & & & & \cup & & \cup & & \\
& & F & \supset & L(D^1) & \supset & \cdots & \supset & L(D^{r-2})
& \supset & L(D^{r-1}) & \supset & L(D^r) = 0.
\end{array}
\end{align}
 All vertical inclusions in this diagram are of codimension one while all drawn horizontal inclusions are of codimension $m$. 
 Here, $L(D)$ denotes the \emph{Cauchy characteristic module} of vector fields
     \begin{align}
        L(D) = \{X\in D \ : \ [X,Y]\in D,  \ \forall \ Y\in D \}.
    \end{align}

The first partition of all germs of $2$-flags (and also more generally for $m$ flags) will be into $2^{r-1}$ \emph{sandwich classes}, labeled by words of the length $r$ over $\{1,\underline{2}\}$ and depending on a sandwich position. Each word starts with $1$. The second letter is $\underline{2} $ iff $D^2\subset F$, and $j$-th letter is $\underline{2} $ iff $D^j\subset L(D^{j-1})$. This definition makes sandwich classes pairwise disjoint. They are also nonempty (see Remark 3 of \cite{Mormul_2009Singularity2flags}).

Next, we refine sandwich classes into finer singularity classes. For that, we have to replace $\underline{2}$ with either $2$ or $3$. Let $j_s=\underline{2}$ be the first (from the right in the sandwich diagram) appearance of the letter $\underline{2}$. Let the next $\underline{2}$ be at $j_\nu$ for some numbers $1<\nu<s$.\footnote{$\nu>1$ because of the lujr property.} These are separated by $l=s-\nu -1$ letters $1$.
The core of the construction consists of taking the small flag of flag’s member $D^s$
\begin{align}
    D^s = V_1\subset V_2\subset V_3\hdots,
\end{align}
$V_{i+1}=V_i+[D^s,V_i]$ and then focusing on this new flag’s member $V_{2l+3}$. The letter $j_\nu=\underline{2}$ is specified to $3$ if and only if 
\begin{align} F\supset V_{2l+3} \text{ (when $\nu =2$)} \quad \text{or} \quad L(D^{\nu-2})\supset V_{2l+3} \text{ (when $\nu>2$).}
\end{align}
In this way, all $j_\nu=\underline{2}$ ($\nu>s$) are specified to be either $2$ or $3$.

The case of general $m$ is more subtle and lengthy to describe, so we recommend the original source \cite{Mormul_2023_multi}. Sandwich classes $\mathfrak{w}_2$, with two characters in the code, are replaced by finer classes $\mathfrak{w}_3$, with three characters, and so on, so we consider characters $k$ and larger when passing from $\mathfrak{w}_{k-1}$ to  $\mathfrak{w}_k$. In essence, one repeats operation similar to $2$-flag case and replaces $j_\nu=k$ with $k+1 $ iff 
\begin{align}  \sigma_k[\nu]\left(D^\nu \right)\subset L\left(D^{\alpha_k(\nu)-2} \right) \text{ when ($\alpha_k(\nu)>2$)} \  \text{or}  \ \sigma_k[\nu]\left(D^\nu \right)\subset F\text{ when ($\alpha_k(\nu)=2$)},
\end{align}
where $\alpha_k(\nu)$ is defined as $\alpha_k(\nu):=\alpha_{k-1}(\mu)$, with $j_\mu=k$ being the nearest $k $ to the left of $j_\nu$, and $\alpha_2$ is the distance from the previous $2$. $\sigma_k[\nu]$ is defined as $\sigma_k[\nu]:=\sigma_{k-1}[\mu]\circ \sigma_2[s_1]\circ \hdots \circ \sigma_2[s_\rho]\circ \sigma_2[\nu]$, with $\mu<s_1<s_2<\hdots<s_\rho<\nu$ such that $j_{s_u}$ are all letters  larger than $1$ and smaller than $k$, and we have $\sigma_2[\nu](U):= (2(\nu-\alpha_2(\nu)-1)+3)$-rd term of $U$'s small flag. In this way, one refines sandwich classes into singularity classes 
\begin{align}
\mathfrak{w}_2\rightarrow\mathfrak{w}_3\rightarrow\hdots\rightarrow\mathfrak{w}_{k}\rightarrow\mathfrak{w}_{k+1} \hdots\rightarrow\mathfrak{w}_{m+1}.
\end{align}

\subsection{Extended Kumpera-Ruiz normal forms}
We introduce Extended Kumpera-Ruiz (EKR) normal forms and label them by letters over the same alphabet as we used for singularity classes
$$ \bf\{1,2,\hdots, m,m+1\}.$$
EKR normal forms are polynomially written germs of rank $(m+1)$ distributions at $0\in \mathbb{R}^{(r+1)m+1}$, that were constructed in \cite{Mormul2004MultidimensionalCP}. Let us now show how to inductively construct a normal form corresponding to the code $ \j_1\j_2\hdots \j_r$.
We start with an empty label distribution
\begin{align}
    (\partial_0,\partial_1,\hdots,\partial_m):=\left(\frac{\partial }{\partial x^0_0},\frac{\partial}{\partial x^0_1},\hdots,\frac{\partial}{\partial x^0_m}\right)
\end{align}
defined in the vicinity of $0\in \mathbb{R}^{m+1}(t, x^0,\hdots, x^m)$. 

Now, let $\j_l=\kk$ be the $l$-th letter. The prefix of the code $ \j_1\j_2\hdots \j_{l-1}$ defines a distribution $D=(Z_1,Z_2,\hdots,Z_{m+1})$ around $0\in \mathbb{R}^{lm+1}(y_1,\hdots,y_{lm+1})$.
We define a new distribution
\begin{align}
    D'=\mathbf{k}D=(Z_1',Z_2',\hdots,Z_{m+1}')
\end{align}
 generated by vector fields
\begin{align}
    Z_1'&=x^l_1Z_1+\hdots +x^l_{k-1}Z_{k-1} +Z_k+(x_k^l+c_k^l)Z_{k+1}+\hdots+(x_m^l+c_m^l)Z_{m+1},\label{Z_construction_start}\\
    Z_2'&=\partial_{x^l_1},\\
    &\hdots \notag\\
    Z'_{m+1}&=\partial_{x^l_m},\label{Z_construction_finish}
\end{align}
defined around $0\in \mathbb{R}^{(l+1)m+1}(y_1,\hdots,y_{lm+1}, x^l_1,\hdots, x^l_m)$. Here $c_j^l$ are constant coefficients determined by the original distribution whose form we consider or a geometric class possessing such normal form (we refer to Section 4.1 in \cite{Mormul_2009Singularity2flags} for more details on how these constants are assigned).

If we put all constants $c_j^l$ to zero, not only the Lie algebra $L(Z_1,Z_2,\hdots,Z_{m+1})$ is nilpotent, but also the distribution $D=(Z_1,Z_2,\hdots,Z_{m+1})$ is strongly nilpotent \cite{Mormul2004MultidimensionalCP}. This is because the distribution is homogeneous and weights of all vector fields spanning it are equal $-1$ (as in \eqref{homogeneous_summands}). The absence/appearance of nonzero constants is analogous to the tangential/nontangential distinction of Goursat classes.

In what follows we put all $c_j^i$ to zero so that we shall consider only homogeneous distributions. This restricts us to strongly nilpotent distributions. General classification would involve considering also distributions with nonzero constants but then we could not extract the exact weight from normal forms. As we shall explain below, the distribution constructed by
\begin{align}
 Z_1'&=x^l_1Z_1+\hdots x^l_{k-1}Z_{k-1} +Z_k+x_k^lZ_{k+1}+\hdots+x_m^lZ_{m+1}    
\end{align}
in place of \eqref{Z_construction_start} is strongly nilpotent and we shall consider only such distributions.
\begin{nt}
    We consider $m$-flags with vanishing constants. These are strongly nilpotent distributions. However, it is not known if all strongly nilpotent EKR normal forms (or multi-flags with such normal forms) have vanishing constants. We come back to this issue in Section \ref{sec:6}.
\end{nt}

For normal forms, one can explicitly read the elements of the sandwich diagram. The lower row reads (see Proposition 2 of \cite{Mormul_2009Singularity2flags})
\begin{align}
    F&=\left(\frac{\partial }{\partial x^1_1},\frac{\partial}{\partial x^1_2},\hdots,\frac{\partial}{\partial x^1_m},\frac{\partial }{\partial x^2_1},\hdots,\frac{\partial }{\partial x^r_m}\right), \\
    L(D^j)&= \left(\frac{\partial }{\partial x^{j+1}_1},\frac{\partial}{\partial x^{j+1}_2},\hdots,\frac{\partial }{\partial x^r_m}\right) \qquad \text{ for } 1\leq j\leq r-1,\\
    L(D^r)&=(0).
\end{align}

The most important property of EKR normal forms, that we study, is that they faithfully represent singularity classes. Singularity classes of multi-flags are in one-to-one correspondence with EKR representing them \cite{Mormul_2023_multi}.
\begin{thrm}[{\cite[Theorem 2.1]{Mormul_2023_multi}}]\label{2023thm_unique}
Let $D$ be the germ at $p\in M$ of a rank-$(m + 1)$ distribution
generating a special $m$-flag of length $r\geq 1$, belonging to a fixed singularity
class $j_1.j_2...j_r$. Then the EKR pseudo-normal forms of $D$ subject to the least upward jumps rule are uniquely of the type $\bf j_1.j_2...j_r$.

In other words, the singularity class of a germ of a special $m$-flag which
is already given in an EKR form $\bf j_1.j_2...j_r$ subject to the least upward jumps rule, is $j_1.j_2...j_r$.
\end{thrm}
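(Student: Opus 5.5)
The statement is \cite[Theorem 2.1]{Mormul_2023_multi}. The plan is to reduce it to one computational claim: \emph{an EKR form of type $\j_1\j_2\hdots\j_r$, subject to lujr, lies in the singularity class $j_1j_2\hdots j_r$.} This is exactly the second formulation in the statement.

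The first formulation then follows formally.
\begin{enumerate}[(a)]
\item Every germ of a special $m$-flag admits some EKR pseudo-normal form. This comes from the generalized Cartan prolongation description of multi-flags \cite{Kumpera2002MultiflagSA,Mormul2004MultidimensionalCP}.
\item The freedom to permute the fibre coordinates $x^l_1,\hdots,x^l_m$ at each prolongation step lets us relabel so that the code satisfies lujr. When a new coordinate direction is first needed, it can be given the smallest unused label.
\item The singularity classes $\mathfrak{w}_{m+1}$ are pairwise disjoint by construction, since each refinement $\mathfrak{w}_{k}\to\mathfrak{w}_{k+1}$ is a dichotomy on an inclusion.
\end{enumerate}
So if a germ $D$ in class $j_1\hdots j_r$ had an EKR form of type $\j'_1\hdots\j'_r$ with $\j'\ne\j$, the claim would place $D$ in class $j'_1\hdots j'_r$ as well, a contradiction.

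I would prove the claim in two stages, both by induction on the position in the code.

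\emph{Stage 1: sandwich classes.} Use the explicit description of $F$ and $L(D^j)$ in EKR coordinates, namely Proposition 2 of \cite{Mormul_2009Singularity2flags} recalled above. The member $D^l$ is, at $0$, spanned by $Z^{(l)}_1$ together with $\partial_{x^{l}_\cdot},\hdots,\partial_{x^r_\cdot}$. By \eqref{Z_construction_start}, $Z^{(l)}_1(0)$ has a nonzero component along the previous-level field $Z^{(l-1)}_1$ exactly when the letter $\j_l$ equals $1$. Hence $D^l\subset L(D^{l-1})$ (or $D^2\subset F$) holds iff $\j_l>1$, which recovers the sandwich word.

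\emph{Stage 2: refinement to letters $k$ versus $k+1$.} Here I need the small-flag members $\sigma_2[\nu](D^\nu)$, namely the $(2(\nu-\alpha_2(\nu)-1)+3)$-rd term of the small flag of $D^\nu$, together with their compositions $\sigma_k[\nu]$. I would prove an auxiliary lemma by induction along the block of letters $1$ separating two letters $>1$. The lemma says that, at $0$, the relevant small-flag member is spanned by coordinate vector fields. These are $\partial_{x^{\mu}_{k-1}}$-type directions at the level $\mu$ of the nearest preceding letter $k$ (a controlled set), together with directions already known to lie in $L(D^{\alpha_k(\nu)-2})$.

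The lemma is proved by bracketing $Z'_1=x^l_1Z_1+\hdots+Z_k+\hdots$ repeatedly with $\partial_{x^l_i}$. Each bracket either strips a coefficient $x^l_i$ or propagates the field one level down. Two facts drive this. Two brackets are consumed per intermediate letter $1$, which explains the count $2l+3$. And the fields $Z_1,\hdots,Z_{k-1}$ appear multiplied by vanishing coordinates, while $Z_k$ appears with coefficient $1$. From this, $\sigma_k[\nu](D^\nu)$ picks up a direction outside $L(D^{\alpha_k(\nu)-2})$ precisely when the letter is $k$, and stays inside when it is $k+1$.

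The main obstacle is this bookkeeping in Stage 2: controlling the composed maps $\sigma_k=\sigma_{k-1}[\mu]\circ\sigma_2[s_1]\circ\cdots$ across interleaved letters of different sizes. I would first settle the $2$-flag case, comparing letters $2$ and $3$, where only one $\sigma_2$ is involved. I would then do the general step by induction on $k$, carrying as inductive hypothesis an explicit spanning set at $0$ for each $\sigma_{k}[\nu](D^\nu)$.
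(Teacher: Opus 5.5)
The paper itself does not prove this statement; it quotes it from \cite{Mormul_2023_multi}, so your proposal has to stand on its own. Your reduction is sound: since the classes are pairwise disjoint, the claim that an lujr EKR form of type $\j_1\cdots\j_r$ lies in the class $j_1\cdots j_r$ gives the uniqueness, and items (a)--(b) are not needed for that. Stage 1 is also correct once you fix an index inherited from the paper's wording. The sandwich test is $D^l\subset L(D^{l-2})$, with $L(D^0)$ read as $F$. The inclusion $D^l\subset L(D^{l-1})$ never holds, because $L(D^{l-1})$ is a codimension-one subspace of $D^l$. With $L(D^{l-2})=(\partial_{x^{l-1}_\cdot},\dots,\partial_{x^r_\cdot})$ your argument works, constants included.

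The gap is Stage 2. It carries the whole content of the theorem, you only sketch it, and the invariant you propose to carry through the induction is false in the generality required. The statement concerns all EKR pseudo-normal forms, with arbitrary constants $c^l_j$. These constants do not change the class, but they do change the small flag at $0$.

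Take $m=2$ and the code $\mathbf{1.2.2}$, with all constants zero except $c'=c^3_2$, so that $Z^{(3)}_1=x^3_1Z^{(2)}_1+\partial_{x^2_1}+(x^3_2+c')\partial_{x^2_2}$. Then $[Z^{(3)}_1,Z^{(2)}_1](0)=\partial_{x^0_0}+c'\partial_{x^1_2}$, and the remaining brackets either lie in $V_2=D^2$ or vanish at $0$. Hence
\[
V_3(0)=\mathrm{span}\{\partial_{x^1_1},\ \partial_{x^0_0}+c'\partial_{x^1_2},\ \partial_{x^2_1},\ \partial_{x^2_2},\ \partial_{x^3_1},\ \partial_{x^3_2}\}.
\]
This is still $\not\subset F(0)$, as it must be. But for $c'\neq 0$ it is not spanned by coordinate vectors.

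Your lemma does hold for homogeneous forms, where at the $\sigma_2$ level it is automatic. The EKR coordinates are then adapted, so $V_i(0)$ is spanned by the $\partial$'s of the coordinates of weight $\le i$ (weights recomputed for the level-$\nu$ form). Stage 2 then collapses to the weight recursion of Section~\ref{sec:4}. As written, your plan therefore covers at best the homogeneous case.

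For the full statement you need an invariant that is insensitive to the constants, formulated modulo $L(D^{\alpha_k(\nu)-2})(0)$, since that is all the class condition tests. Because $\sigma_k[\nu]$ composes several small-flag operations, this invariant must also describe generators of the germs of the intermediate modules, not just their values at $0$. Brackets, and hence the next small flag, are not determined by values at a point. Finding that invariant and propagating it through the blocks of $\mathbf 1$'s and the interleaved letters is exactly the bookkeeping you defer, and it is the proof.
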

Next, we shall study weights of $m$-flags using their EKR presentation.

\section{\texorpdfstring{Weights of strongly nilpotent $m$-flags}{Weights of strongly nilpotent m-flags}} \label{sec:4}
\subsection{Determining weights from codes}
Let us consider weights, defined in \eqref{weight_def}, in the context of an EKR normal form $D=(Z_1,Z_2,\hdots,Z_{m+1})$ with vanishing constants. First, we put 
\begin{align}
w(Z_1)=w(Z_2)=\hdots=w(Z_{m+1})=-1,
\end{align}
where $Z_j:=Z_j^r$ are generators of the last distribution associated with the code $\j_1,\j_2,\hdots,\j_r$.  Then, by \eqref{weight_def}
\begin{align}
    w(x_1^r)= w(x_2^r)=\hdots  =w(x_m^r)=1.
\end{align}
The condition $w(Z_1)=-1$ implies that $w(Z_1^{r-1})=-1-w(x_1^r)=-2$. More generally, any step of the construction \eqref{Z_construction_start}-\eqref{Z_construction_finish} give a recursive condition on weight of vector fields $Z^s_i$ and coordinates appearing in the construction
\begin{align}
w\Big(Z_1^{s-1}\Big)&=w(Z_1^s)+w\left(\frac{\partial}{\partial x_{1}^{s}}\right)\label{weights_of_coordinates_start}, \\
    w\left(\frac{\partial}{\partial x_l^{s-1}}\right)&=w(Z_1^s)+w\left(\frac{\partial}{\partial x_{l+1}^{s}}\right),  & \text{ for } l=2,3,\hdots, j_{s}-2,\\
        w\left(\frac{\partial}{\partial x_{j_s-1}^{s-1}}\right)&=w(Z_1^s),\\
     w\left(\frac{\partial}{\partial x_l^{s-1}}\right)&=w(Z_1^s)+w\left(\frac{\partial}{\partial x_{l}^{s}}\right),  & \text{ for } l=j_{s},\hdots m.\label{weights_of_coordinates_end}
\end{align}
where we used \eqref{Z_construction_start} to obtain $w(Z^{s-1}_1)=w(Z^{s}_1)-w(x^s_1)$. The growth of weights $ w\left(\frac{\partial}{\partial x_l^{s}}\right)$ captures the nonholonomy of a distribution and is encoded in the small growth vector. In what follows, we are going to put the above recursion in terms of certain linear operators.

We define a linear automorphism $\overline{\j}$ of $\mathbb{R}^{m+1}$ associated to any fixed operation  $\j \in \{\mathbf{1},..., \mathbf{m+1}\}$, by its basis $e_0,e_1,..., e_{m}$
\begin{align}\label{linear_j}
    \overline{\mathbf{j}}(e_0)=e_0+e_1+\cdots+e_m,\qquad
\overline{\mathbf{j}}(e_i)=
\begin{cases}
e_{i-1}, & 1\le i\le j-1,\\
e_i, & j\le i\le m,
\end{cases}
\end{align}
so, in particular, $\overline{\mathbf{1}}(e_1)=e_1,...,\overline{\mathbf{1}}(e_{m})=e_{m}$ and $\overline{\mathbf{m}}(e_2)=e_1,...,\overline{\mathbf{m+1}}(e_{m})=e_{m-1}$.
This operation expresses nilpotency orders of special $m$-flags, which are strongly nilpotent.
It can be seen when the operator $\overline{\j}$ acts on a vector containing weights of $Z_1^s$ and $m$ coordinates, it produces weights of next $m$ coordinates as \eqref{weights_of_coordinates_start}-\eqref{weights_of_coordinates_end} read
\begin{align}
   \overline{\j}  \begin{bmatrix}
           -w(Z_{1}^s) \\
           w(x^s_1) \\
           \vdots \\
           w(x^s_{m})
         \end{bmatrix} =  \begin{bmatrix}
           -w(Z_{1}^{s-1}) \\
           w(x^{s-1}_1) \\
           \vdots \\
           w(x^{s-1}_{m})
         \end{bmatrix}.
\end{align}
 Additionally, because weights of coordinates are positive, $|w(Z_1^s)|$ is equal to the smallest weight in the new vector. Therefore, taking the sequence of all weights except the smallest one, we take all weights $w(Z_{1}^l),
           w(x^l_1),
           \hdots,
           w(x^l_{m})$ ($l=1,2,\hdots, r$) once but we omit exactly once $w(Z_{1}^r),\hdots,w(Z_{1}^1)$. This is equivalent to taking the sequence of weights $ w(x^r_1),\hdots w(x^r_m),w(x^{r-1}_1), \hdots, w(x^{r-1}_m),\hdots w(x^{0}_0),\\w(x^{0}_1), \hdots, w(x^{1}_m)$.

From weights, one can, in particular, derive nilpotency order of Lie algebra spanned by vector fields $Z_i$.
\begin{thrm}[{\cite[Theorem 4]{Mormul2004MultidimensionalCP}}]\label{thm:multiflags_and_nilpotency}
     Every rank $m+1$ distribution $D$ generating a special $m$-flag of a length
 $r\geq 1$ is locally weakly nilpotent
 and a local nilpotent basis around a point $p$ is given by the EKR normal form $\{Z_1,...,Z_{m+1}\}$.
 The nilpotency order of the real Lie algebra $L(\j_1\j_2 ... \j_r):=  L_\mathbb{R}(Z_1,...,Z_{m+1})$ is equal to the (last) component of $e_{m}$ in the vector $ \overline{\j}_1
\overline{ \j}_2...\overline{\j}_r(e_0 + ... +e_{m})$
 Moreover, if a germ of $D$ at a certain point admits a local EKR form with no non-zero
 constants, then that germ is also strongly nilpotent in the sense of \cite{Agrachev2001ONTS} and \cite{MomrulGoursatdim7}.
\end{thrm}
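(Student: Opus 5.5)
The plan is to argue by induction on the length $r$ of the code, following the inductive construction \eqref{Z_construction_start}--\eqref{Z_construction_finish}. I treat the homogeneous case (all $c^l_j=0$) first and then reduce the general case to it.

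\textbf{Homogeneous case: grading and nilpotency.} I assign weights to all coordinates by the recursion \eqref{weights_of_coordinates_start}--\eqref{weights_of_coordinates_end}. By the discussion preceding the statement, this amounts to applying the operators $\overline{\j}_r,\overline{\j}_{r-1},\dots,\overline{\j}_1$ successively to the initial vector $e_0+\dots+e_m$, which encodes $w(Z_1^r)=-1$ and $w(x^r_i)=1$. By induction on $s$ one checks directly that every generator $Z_i$ is then a homogeneous polynomial vector field of weight $-1$. The fields $\partial_{x^r_i}$ are trivially such. For $Z_1$, each summand $x^l_iZ_i$ has weight $w(x^l_i)+w(Z_i^{l-1})$, and the recursion was built exactly to make this equal $-1$.

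By the superadditivity proposition, an iterated bracket of $k$ generators is homogeneous of weight $-k$. A nonzero polynomial vector field has all components of weight $\geq -\max_k w_k$. Hence every bracket of length exceeding $N:=\max_k w_k$ vanishes. This gives nilpotency of $L_\mathbb{R}(Z_1,\dots,Z_{m+1})$ with order at most $N$.

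\textbf{Identifying $N$.} I would show by induction that the entries of $\overline{\j}_1\cdots\overline{\j}_r(e_0+\dots+e_m)$ form a nondecreasing sequence in the index, so that the $e_m$-component is the maximum. The point is that $\overline{\j}$ adds the $e_0$-component to every coordinate and, for $i<j$, only shifts lower indices downward. The $e_m$ coordinate is never shifted, since $j\le m+1$ gives $\overline{\j}(e_m)=e_m$ unless $j=m+1$. In the case $j=m+1$ the shift still preserves monotonicity, because the new $e_m$-component is the old $e_0$-component plus the old $e_m$-component, which dominates.

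\textbf{Main obstacle: the weights are genuine small-flag weights.} This is the step I expect to be hardest. One must show that the coordinates are linearly adapted and that the formal weights agree with \eqref{weight_def}, i.e. that $\partial_{x^l_i}$ really appears in $D_{w(x^l_i)}(0)$ and not earlier. Only then does the lower bound follow. Concretely, the iterated bracket of length $N$ producing $\partial_{x^0_m}$ at $0$ must be nonzero, so the order is exactly $N=d_{NH}$, in agreement with Proposition \ref{prop:nilpotentappnilp}.

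To settle this, I would exhibit explicit brackets. Bracketing $\partial_{x^l_i}$ with $Z_1$ yields $Z_i^{l-1}$, or $Z^{l-1}_{i+1}$, according to the letter $\j_l$. Iterating this realizes each step of the recursion. The "not earlier" part follows automatically from homogeneity: a weight $-k$ field evaluated at $0$ has nonzero components only along coordinates of weight exactly $k$.

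\textbf{Strong nilpotency.} Since each $Z_i$ is homogeneous of weight $-1$ in adapted coordinates, we have $\widehat{Z_i}=Z_i$. Hence $\widehat{D}=D$, and the germ is strongly nilpotent.

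\textbf{General case (nonzero constants).} Weak nilpotency persists because the fields remain polynomial and triangular. The coefficient of $\partial_{x^{l-1}_\cdot}$ depends only on coordinates of level $\geq l$, with constants added. I would filter by the level index (a nonnegative grading in which every bracket strictly decreases the "depth"). Combined with bounded polynomial degree, this shows that brackets of sufficiently high length vanish. The same bracket computations as in the homogeneous case then show that the order is unchanged: the constants only add lower-weight terms and do not alter the leading brackets at $0$.
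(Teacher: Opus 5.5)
Your treatment of the homogeneous case is sound and matches the mechanism the paper alludes to (the paper only cites Mormul for this theorem). The recursion makes every $Z_i$ homogeneous of weight $-1$, so length-$k$ brackets are homogeneous of weight $-k$. Hence brackets longer than $N:=\max_i w_i$ vanish, and $\widehat{D}=D$.

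The genuine gap is the step identifying $N$ with the $e_m$-component. The invariant you propose is false. $\overline{\mathbf{j}}$ puts $v_0$ \emph{alone} into the slot $j-1$ vacated by the downward shift, so every letter $j\ge 2$ breaks monotonicity. For $m=2$, $\overline{\mathbf{2}}(1,1,1)=(2,1,2)$. For $j=m+1$ the new $e_m$-component is $v_0$, the smallest new entry, not $v_0+v_m$; for $m=2$, $\overline{\mathbf{3}}(1,1,1)=(2,2,1)$.

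Worse, the conclusion itself fails without the least upward jumps rule, which you never use. Take $m=2$ and the non-lujr code $\mathbf{1}\mathbf{3}$. Then $\overline{\mathbf{1}}\,\overline{\mathbf{3}}(1,1,1)=(2,4,3)$, whose $e_m$-component is $3$. The EKR fields are $Z_1=x^2_1Y+x^2_2\partial_{x^1_1}+\partial_{x^1_2}$, $Z_2=\partial_{x^2_1}$, $Z_3=\partial_{x^2_2}$, with $Y=\partial_{x^0_0}+x^1_1\partial_{x^0_1}+x^1_2\partial_{x^0_2}$. They give the nonzero length-$4$ bracket $[[Z_3,Z_1],[Z_2,Z_1]]=[\partial_{x^1_1},Y]=\partial_{x^0_1}$, so the order is $4$, not $3$. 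Any correct proof of this step must invoke lujr. You also never checked that the maximal weight over \emph{all} levels is attained at level $0$.

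To repair it, set $\tau(v):=\min\{p\ge 0:\ v_p\le v_{p+1}\le\cdots\le v_m\}$. By \eqref{eq:lujrmatrix} and positivity, $\tau(\overline{\mathbf{j}}v)\le j-1$ if $\tau(v)\le j$, and $\tau(\overline{\mathbf{j}}v)\le \tau(v)$ otherwise. Put $\tau_s:=\tau(\overline{\mathbf{j}}_s\cdots\overline{\mathbf{j}}_r(e_0+\dots+e_m))$ and $M_s:=\max(j_1,\dots,j_{s-1})$. Downward induction using lujr, $j_s\le M_s+1$, gives $\tau_s\le M_s$. In the case $\tau_{s+1}>j_s$, the bound $M_{s+1}\ge\tau_{s+1}>j_s$ forces $M_{s+1}=M_s$. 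Hence $\tau_2\le j_1=1$, and the final $\overline{\mathbf{1}}$ gives $\tau_1=0$. For the levels, the entries of $\overline{\mathbf{j}}v$ form the multiset $\{v_0,v_0+v_1,\dots,v_0+v_m\}$, whose maximum strictly exceeds every entry of $v$.

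Two lesser points. First, your ``main obstacle'' needs no explicit bracket bookkeeping; your sketch there is loose anyway, since $[\partial_{x^l_i},Z^r_1]$ is not one of the fields $Z^{l-1}_\cdot$ when $l<r$. The EKR form generates a special $m$-flag, whose big flag reaches $TM$, so it is bracket generating. A homogeneous field of weight $-k$ evaluated at $0$ lies in $V_k=\mathrm{span}\{\partial_{x_i}: w_i=k\}$, so the length-$k$ brackets must span $V_k$. This gives linear adaptedness, the true weights, and a nonzero length-$N$ bracket at once.

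Second, for nonzero constants your level filtration is too vague to stand. Use instead the global translation $x^l_i\mapsto x^l_i+c^l_i$, which conjugates the EKR fields to the homogeneous ones, so $L(\mathbf{j}_1\cdots\mathbf{j}_r)$ does not depend on the constants. Equivalently, make your ``lower-weight'' remark precise. Constants contribute only terms of weight $<-1$, so a length-$k$ bracket is its homogeneous counterpart plus terms of weight $<-k$. This yields both the upper and the lower bound.
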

It was conjectured \cite{Mormulnotexceedingsix, Mormul2004MultidimensionalCP} that, for a given distribution $D$, the nilpotency orders are minimal in the EKR presentation. This remains an open question.

The Theorem \ref{thm:multiflags_and_nilpotency} allows us to express the nonholonomy degree of a given strongly nilpotent distribution $D$ in terms of its EKR form.
\begin{prpstn}[\cite{Mormul2004MultidimensionalCP},  Corollary 2]
   For distribution germs $D$ as in the ‘Moreover’ part of Theorem \ref{thm:multiflags_and_nilpotency}, one is able to effectively compute the degree of nonholonomy of $D$. It is equal to the nilpotency order of the nilpotent Lie algebra given by the nilpotent approximation of $D$, and in the discussed case that algebra is just $ L_\mathbb{R}(Z_1,...,Z_{m+1})$. Hence, the nonholonomy degree of such $D$ equals the last component in the vector $\overline{ \j}_1...\overline{\j}_r(e_0 + ... +e_{m})$.
\end{prpstn}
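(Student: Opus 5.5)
The plan is to reduce the statement to Theorem \ref{thm:multiflags_and_nilpotency} combined with Proposition \ref{prop:nilpotentappnilp}. For a germ $D$ admitting an EKR form with vanishing constants there are three things to check. First, $D$ is strongly nilpotent. Second, the degree of nonholonomy of $D$ coincides with that of its nilpotent approximation $\widehat D$. Third, in the adapted coordinates read off from the EKR construction, $\widehat D$ is just $(Z_1,\dots,Z_{m+1})$ itself. Once this is established, the degree of nonholonomy equals the nilpotency order of $L_\mathbb{R}(Z_1,\dots,Z_{m+1})$. By Theorem \ref{thm:multiflags_and_nilpotency}, that order is the last component of $\overline{\j}_1\cdots\overline{\j}_r(e_0+\dots+e_m)$.

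The key steps, in order, are as follows.

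\emph{Step 1 (homogeneity).} With all $c^l_j=0$, assign to the coordinates the weights produced by the recursion \eqref{weights_of_coordinates_start}--\eqref{weights_of_coordinates_end}. Starting from $w(x^r_i)=1$, each $Z_i$ is a sum of monomials of weight exactly $-1$. This is checked inductively on the construction \eqref{Z_construction_start}--\eqref{Z_construction_finish}: multiplying $Z^{s}_j$ by $x^{s}_j$ shifts the weight by $w(x^s_j)$, and this shift is exactly what the recursion compensates for.

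\emph{Step 2 (adaptedness).} We must show these weights are the weights \eqref{weight_def} of a linearly adapted chart at $0$. The small flag of $D$ at $0$ must be spanned by the coordinate directions of weight $\le j$. Because each $Z_i$ is homogeneous of weight $-1$, an iterated bracket of length $j$ is homogeneous of weight $-j$, so its value at $0$ lies in the span of the $\partial$'s of weight $\le j$. This gives one inclusion, $D_j(0)\subseteq$ the span of coordinate directions of weight $\le j$.

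For the reverse inclusion I would argue recursively through the EKR steps. Each step $\mathbf{k}D$ produces, via brackets $[\partial_{x^l_i},Z_1']$, the fields $Z_i$ (or $Z_{i+1}$) of the previous level. The coordinate direction $\partial_{x^{s-1}_l}$ of the previous level is thereby reached after exactly the number of brackets predicted by the recursion.

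\emph{Step 3 (conclusion).} The adapted chart shows $\widehat D=D$, hence $D$ is strongly nilpotent. Proposition \ref{prop:nilpotentappnilp} gives that $d_{NH}$ equals the nilpotency order of the Lie algebra generated by $\widehat Z_i=Z_i$. Theorem \ref{thm:multiflags_and_nilpotency} then identifies this order with the last component of $\overline{\j}_1\cdots\overline{\j}_r(e_0+\cdots+e_m)$. As a consistency check, this is the largest weight, namely $w(x^0_m)$ read off via the operators $\overline{\j}$.

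\textbf{Main obstacle.} I expect Step 2's reverse inclusion to be the hard part: showing that the weights defined by the recursion are the true small-flag weights at $0$, not merely an upper grading. Higher brackets could vanish at $0$, since the coefficients $x^l_i$ vanish there.

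I would handle this by induction on $r$. Suppose the claim holds for the prefix distribution $(Z_1,\dots,Z_{m+1})$ on $\mathbb{R}^{rm+1}$. In $\mathbf{k}D$, the bracket $[\partial_{x^r_i},Z'_1]$ gives exactly $Z_i$ for $i<k$ and $Z_{i+1}$ for $i\ge k$. Together with $Z'_1$ evaluated at $0$, which gives $Z_k$, these show that the previous generators appear in the new small flag with a uniform weight shift, and the sizes of the shifts are exactly those recorded by $\overline{\mathbf{k}}$.
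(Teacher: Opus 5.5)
Your route (homogeneity, then $\widehat{Z_i}=Z_i$, then Proposition \ref{prop:nilpotentappnilp} and Theorem \ref{thm:multiflags_and_nilpotency}) is the one the paper has in mind. The paper gives no proof of its own: it cites Mormul and elsewhere only asserts that homogeneous generators ``coincide with their nilpotent approximations''. Your Steps 1 and 3 and the easy half of Step 2 are fine.

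The gap is the reverse inclusion in Step 2, which Step 3 needs in order to conclude $\widehat{Z_i}=Z_i$; the induction on $r$ you sketch does not work. The old generators do not enter the flag of $\mathbf{k}D$ with a uniform shift. $Z_k$ gets new weight $-1$ (and agrees with $Z_1'$ only modulo fields vanishing on $\{x^r=0\}$), while the other $Z_i$ get weight $-2$. Consequently the new weights of the old coordinates come from $\overline{\mathbf{j}}_{r-1},\dots,\overline{\mathbf{j}}_1$ acting on $\overline{\mathbf{k}}(e_0+\dots+e_m)$ rather than on $e_0+\dots+e_m$, and this is not a shift of the old weights. For $m=1$, passing from $\mathbf{1}\mathbf{1}$ to $\mathbf{1}\mathbf{1}\mathbf{2}$ changes the weights of $x^0_0,x^0_1,x^1_1,x^2_1$ from $1,3,2,1$ to $2,5,3,1$. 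A hypothesis about the \emph{unweighted} small flag of the prefix distribution says nothing about the weighted length at which an old bracket enters the new flag. To run such an induction you would need a statement for arbitrary generator weights $v$, which you have neither formulated nor proved.

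In fact no induction is needed. A special multi-flag is bracket generating, so the values at $0$ of the iterated brackets of $Z_1,\dots,Z_{m+1}$ span $T_0M=\bigoplus_n E_n$, where $E_n:=\mathrm{span}\{\partial_i : w_i=n\}$. By homogeneity a bracket of length $n$ is a polynomial field of weight $-n$, so its value at $0$ lies in $E_n$. Hence the length-$n$ brackets alone span $E_n$ at $0$, and $D_j(0)=\bigoplus_{n\le j}E_n$.

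The same grading argument shows that the order of $x_i$ at $0$ is exactly $w_i$. Each $Z_{i_1}\cdots Z_{i_s}x_i$ is homogeneous of weight $w_i-s$, so it vanishes at $0$ for $s<w_i$. On the other hand, some bracket of length $w_i$, expanded into products of length $w_i$, has a nonzero $\partial_i$-component at $0$. So the EKR coordinates are privileged, which is what $\widehat{Z_i}=Z_i$ actually requires.

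You can also bypass nilpotent approximations entirely. Let $W:=\max_i w_i$ and let $N$ be the largest length of a nonzero bracket in $L_{\mathbb{R}}(Z_1,\dots,Z_{m+1})$. The easy inclusion gives $d_{NH}\ge W$. Homogeneity kills every bracket of length $>W$, so $N\le W$. Finally $N\ge d_{NH}$, since otherwise $D_{d_{NH}}=D_{d_{NH}-1}$. Thus $d_{NH}=N$, and Theorem \ref{thm:multiflags_and_nilpotency} identifies this with the last component of $\overline{\mathbf{j}}_1\cdots\overline{\mathbf{j}}_r(e_0+\dots+e_m)$.
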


Before moving to the main theorem, let us demonstrate an example of how linear operations introduced in \eqref{linear_j} work.
\begin{xmpl}
    Let $m=2$. The nilpotency order of the algebra $L(\mathbf{1}.\mathbf{1}.\mathbf{2})$ can be computed from $\overline{ \mathbf{1}}\overline{ \mathbf{1}}\overline{\mathbf{2}}$ acting on $e_0+e_1 + e_2$
    \begin{align}
        \overline{ \mathbf{1}} \overline{ \mathbf{1}} \overline{\mathbf{2}} \begin{bmatrix}
            1 \\ 1 \\ 1
        \end{bmatrix} = \begin{bmatrix} 1 & 0 & 0 \\ 1 &1 & 0 \\ 1&0&1\end{bmatrix}
        \begin{bmatrix} 1 & 0 & 0 \\ 1 &1 & 0 \\ 1&0&1\end{bmatrix}
        \begin{bmatrix} 1 & 1 & 0 \\ 1 &0 & 0 \\ 1&0&1\end{bmatrix}\begin{bmatrix}
            1 \\ 1 \\ 1
        \end{bmatrix} = \begin{bmatrix}
            2 \\ 5 \\ 6
        \end{bmatrix},
    \end{align}
so it is equal to $6$.

Similarly, the nilpotency order of the algebra $L(\mathbf{1}.\mathbf{2}.\mathbf{2})$ 
\begin{align}
    \overline{ \mathbf{1}}\overline{ \mathbf{2}}\overline{\mathbf{2}}
    \begin{bmatrix} 1 \\ 1 \\ 1\end{bmatrix}=
    \begin{bmatrix} 1 & 0 & 0 \\ 1 &1 & 0 \\ 1&0&1\end{bmatrix}
        \begin{bmatrix} 1 & 1 & 0 \\ 1 &0 & 0 \\ 1&0&1\end{bmatrix}
        \begin{bmatrix} 1 & 1 & 0 \\ 1 &0 & 0 \\ 1&0&1\end{bmatrix}\begin{bmatrix} 1 \\ 1 \\ 1\end{bmatrix}=
    \begin{bmatrix} 3 \\ 5 \\ 7\end{bmatrix}
\end{align}
is equal to $7$.
\end{xmpl}

\subsection{Inequivalence of EKR forms}

In this section, we present the main theorem of this paper.
\begin{thrm}\label{main_theorem}Fix $m\ge 1$ and $r\ge 1$. 
Let $D$ and $\widetilde D$ be two \emph{strongly nilpotent} germs of special $m$-flags of length $r$,
presented in homogeneous EKR normal forms (all constants $c^l_j=0$), with lujr codes
$C=j_1\cdots j_r$ and $\widetilde C=\tilde j_1\cdots \tilde j_r$.
If $C\neq \widetilde C$, then the associated small growth vectors satisfy
\begin{align}
    \mathrm{sgrv}(D)\neq \mathrm{sgrv}(\widetilde D),
\end{align}
hence the germs $D$ and $\widetilde D$ are not diffeomorphic.

In particular, within the homogeneous (strongly nilpotent) locus, distinct singularity classes
are pairwise inequivalent. \end{thrm}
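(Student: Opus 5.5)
The plan is to reduce the statement to a purely combinatorial injectivity claim about the weight recursion, and then prove that claim by induction along the code. \textbf{Step 1 (weights determine sgrv).} For a homogeneous EKR form the generators $Z_1,\dots,Z_{m+1}$ are homogeneous of weight $-1$, so the coordinates $x^l_i$ (and $x^0_0$) are linearly adapted with the weights computed by \eqref{weights_of_coordinates_start}--\eqref{weights_of_coordinates_end}. Hence $\dim D_s(0)$ equals the number of coordinates of weight $\le s$. Therefore $\mathrm{sgrv}(D)$ and the multiset $W(C)$ of coordinate weights determine each other. Since the small growth vector is a diffeomorphism invariant of the germ, it suffices to prove that $C\mapsto W(C)$ is injective on lujr codes of length $r$. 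By Theorem~\ref{2023thm_unique}, the "in particular'' clause then follows at once.

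\textbf{Step 2 (encode $W(C)$ by the operators $\overline{\j}$).} Set $v_r=(1,1,\dots,1)$ and $v_{s-1}=\overline{\j}_s v_s$. The zeroth entry of $v_s$ is $-w(Z_1^s)$, and the remaining entries are the weights of $x^s_1,\dots,x^s_m$. Then $W(C)$ is the union of the last $m$ entries of $v_r,\dots,v_1$, together with all entries of $v_0$. Explicitly, for $a=(a_0,\dots,a_m)$ and letter $j$,
\[
(\overline{\j}a)_0=a_0,\quad (\overline{\j}a)_i=a_0+a_{i+1}\ (i<j-1),\quad (\overline{\j}a)_{j-1}=a_0,\quad (\overline{\j}a)_i=a_0+a_i\ (i\ge j).
\]
So each step adds $a_0$ to a cyclic-type rearrangement of the previous entries. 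I will record two structural facts. First, $a_0$ is the smallest entry of every $v_s$ and it is nondecreasing as $s$ decreases. Second, under the lujr rule, letters $j$ with $j-1$ exceeding the current "active'' index act as $\overline{\mathbf{1}}$ on the untouched coordinates. The second fact gives a normal form: the entries of $v_s$ with index above the running maximum stay of the form $a_0+\text{const}$.

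\textbf{Step 3 (reconstruction by induction on $r$).} I would recover the code from $W(C)$ by peeling from the smallest weights upward. This amounts to reading the code from the right, since small weights sit at high levels $s$. Concretely, I would show that the numbers $n_1,n_2,\dots$ of coordinates of weight $1,2,\dots$, up to some explicit threshold depending on the suffix $j_{s+1}\cdots j_r$, are determined by that suffix alone. I would then show that the first weight value where the count depends on $j_s$ distinguishes different values of $j_s$. The mechanism is that a larger letter $j_s$ produces the entry $a_0$ in position $j_s-1$ instead of $a_0+a_{j_s-1}$. This creates an extra small weight equal to $-w(Z_1^s)$, and it lowers exactly $j_s-1$ entries by shifting. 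The count of weights equal to $-w(Z_1^s)$ (respectively, in a short window above it) is therefore strictly monotone in $j_s$. Monotonicity (nonnegativity) of the later operators then ensures that prefixes cannot produce compensating weights below that window, because all later weights are at least $-w(Z_1^{s-1})>$ the window. Induction on the position then gives $C=\widetilde C$ whenever $W(C)=W(\widetilde C)$.

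\textbf{Main obstacle.} The hard step is the last one: making precise the threshold below which weights are "frozen'' by the suffix. Levels interleave, because a letter $j_s>1$ produces a coordinate of weight only $-w(Z_1^s)$, possibly smaller than weights created at level $s$. So I cannot simply say that deeper levels have larger weights. The first thing I would try is to prove that every weight produced at levels $<s-1$ is at least $-w(Z_1^{s-1})\ge 2\,(-w(Z_1^s))$, which would cleanly separate the contribution of $j_s$. If this fails for letters beyond the running maximum, I would compare the two sorted weight sequences lexicographically at the first differing letter from the right, using the lujr constraint to control which coordinates can be permuted.
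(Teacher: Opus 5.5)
There is a genuine gap: the mechanism in Step 3 is false. Your Step 1 reduction is sound, since $\mathrm{sgrv}$ and the multiset $W(C)$ determine each other.

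\textbf{What fails.} For any $a=(a_0,\dots,a_m)$ and any letter $j\ge 2$, the vector $\overline{\j}a$ has zeroth entry $a_0+a_1$. Its entries $1,\dots,m$ form the multiset $\{a_0,\,a_0+a_2,\dots,a_0+a_m\}$, independently of $j$; only their \emph{positions} depend on $j$. Compared with $j_s=1$, the sole change is that $a_0$ and $a_0+a_1$ trade places between the zeroth slot and the coordinate slots. Hence all letters $j_s\ge 2$ give the same level-$(s-1)$ weights and the same $-w(Z_1^{s-1})$. So the number of weights equal to $-w(Z_1^s)$ is not monotone in $j_s$, and no $j_s-1$ entries are lowered.
\begin{itemize}
\item For $m=2$, the codes $\mathbf{122}$ and $\mathbf{123}$ give $v_2=(2,1,2)$ and $(2,2,1)$, with identical level-$2$ weights. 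They differ only at level $1$ ($\{2,4\}$ versus $\{2,3\}$), after the next letter $\mathbf{2}$ has pulled entry $1$ into the zeroth slot.
\item Prefixes \emph{can} compensate. When $j_s=1$ the value $a_0$ is not lost but carried in the zeroth slot and deposited later. For $m=2$, both $\mathbf{11}$ and $\mathbf{12}$ have exactly three weights equal to $1$ (multisets $\{1^3,2^2,3^2\}$ and $\{1^3,2^2,3,4\}$).
\item Decisively, $\mathbf{112}$ and the non-lujr $\mathbf{113}$ ($m=2$) have the same multiset $\{1,1,1,2,2,3,4,5,6\}$. So no argument that reads $j_s$ off a window of small weights from the suffix alone can work; lujr must enter non-locally.
\end{itemize}
Two smaller claims are also wrong. $(v_s)_0$ is the smallest entry of $v_{s-1}$, not of $v_s$. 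The separation $-w(Z_1^{s-1})\ge -2w(Z_1^s)$ fails, e.g.\ $-w(Z_1^{s-1})=-w(Z_1^s)$ whenever $j_s=1$.

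\textbf{The missing idea.} You need to follow how the discrepancy propagates leftward, which is what the paper does.
\begin{itemize}
\item Write the codes as $P\,\mathbf{k}\,O$ and $Q\,\mathbf{l}\,O$ with $k<l$, and let $u$ be $O$ applied to $(1,\dots,1)$. Then $\mathbf{k}u$ and $\mathbf{l}u$ have the same multiset of entries and agree in positions $0,\dots,k-2$ but not in position $k-1$ (Lemma \ref{equal}), i.e.\ $\mathrm{pr}=k-1$.
\item By Lemma \ref{obserwacje} and the case analysis (I)--(III), $\mathrm{pr}$ never increases. It drops unless both codes apply the same letter $j\le\mathrm{pr}$.
\item Since $Q\,\mathbf{l}$ is lujr, $Q$ contains $k,k-1,\dots,1$, with leftmost occurrences increasing from left to right. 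At the leftmost $\mathbf{i}$, for $i=k,\dots,2$, one has $\mathrm{pr}<i$, so $\mathrm{pr}$ drops.
\item Thus $\mathrm{pr}$ reaches $0$ (equal multisets, $\tilde v_0\neq\tilde w_0$) before the initial $\mathbf{1}$ is applied. The next letter gives level weights $\{\tilde v_0+\tilde v_i\}_{i\ge1}$ and $\{\tilde w_0+\tilde w_i\}_{i\ge1}$, whose sums differ by $(m-1)(\tilde v_0-\tilde w_0)$.
\end{itemize}
This is the non-local use of lujr your plan lacks.

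Be aware that this argument produces a difference at one level only. As your Step 1 rightly stresses, $\mathrm{sgrv}$ sees only the total multiset $W(C)$, so the passage from one level to $W(C)$ still requires an argument. For $m=1$ the factor $m-1$ vanishes, and the statement itself needs the Goursat restriction to codes starting with $\mathbf{11}$: the codes $\mathbf{11}$ and $\mathbf{12}$ both give $\{1,1,2,3\}$.
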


The existence is implied from the stratification into geometric classes and the construction of EKR normal forms, where we can match $ \j_1 \j_2 . . . \j_r$ form to  $ j_1 j_2 . . . j_r$ class and choose constant appropriately. We look at strongly nilpotent geometric classes through EKR lenses and show that these are indeed disjoint and have different weights (hence also small growth vectors in the strongly nilpotent case).
In the previous section, we stated that elements of the small growth vector (dimensions of Lie flag) of EKR normal forms are encoded by weights being taken as $m$ out of $m+1$
components of the following vectors
\begin{align}
    \begin{bmatrix} 1 \\ \hdots \\ 1\end{bmatrix}, \ \overline{\mathbf{j}_r}
    \begin{bmatrix} 1 \\ \hdots \\ 1\end{bmatrix}, \quad \overline{ \mathbf{j}_{r-1}}\overline{\mathbf{j}_r}
    \begin{bmatrix} 1 \\ \hdots \\ 1\end{bmatrix}, \ \hdots,\quad \overline{\mathbf{j}_{1}}\hdots\overline{\mathbf{j}_{r-1}}\overline{\mathbf{j}_r}
    \begin{bmatrix} 1 \\ \hdots \\ 1\end{bmatrix}.
\end{align}
Since each singularity class admits its lujr EKR representative and the small growth vector is a diffeomorphism invariant, it suffices to show that distinct lujr codes yield distinct weight sequences in the homogeneous (strongly nilpotent) case.

To prove this fact, it will be convenient for us to consider a more general problem that can be proven inductively. First, we introduce auxiliary notation.
For a given $(m+1)$-dimensional vector $v=(v_0,v_1,\hdots,v_m)$, let $v^{(s)}$ denote its $s$-set of vectors, so that
\begin{align}\label{s_set_of_vectors_before}
    v^{(s)} =\left\{w=(v_0,v_1,..,v_{s-1},v_{s-1+\sigma(1)},\hdots,v_{s-1+\sigma(m-s+1)} )\ | \ \sigma\in \Sigma_{m-s+1}\right\}.
\end{align}
This notion is expanded in Section \ref{subsec:ssets}.
\begin{thrm}\label{main_theorem_rephrased}
    Let $v,w$ be two $(m+1)$-dimensional vectors with positive integer components such that $v^{(k)}=w^{(k)}$. Then sequences of weights associated with two lujr codes $ j_1 j_2 . . . j_rk $ and $ j'_1 j'_2 . . . j'_rl$, ending with $k$ and $l>k$ respectively,
    \begin{align}
      \overline{\j_1 \j_2}\hdots\overline{ \j_r \kk} v,   \quad  \overline{\j_1' \j_2'}\hdots \overline{\j_r' \ll} w 
    \end{align}
    are different. Here, by sequences of weights, we mean the sequence of $m$ largest components of 
    \begin{align}
    \overline{ \mathbf{k}}v,\ \overline{\mathbf{j}_r}\overline{ \mathbf{k}}
    v, \ \overline{ \mathbf{}}\overline{ \mathbf{j}_{r-1}}\overline{\mathbf{j}_r}\overline{ \mathbf{k}}
    v, \ \hdots,\ \overline{\mathbf{j}_{1}}\hdots\overline{\mathbf{j}_{r-1}}\overline{\mathbf{j}_r}\overline{ \mathbf{k}}
    v, \quad \text{ and } \quad \overline{ \mathbf{l}}w,\ \overline{ \mathbf{}\mathbf{j}'_r \mathbf{l}}
    w, \ \overline{ \mathbf{}}\overline{ \mathbf{j}'_{r-1}}\overline{\mathbf{j}'_r} \overline{ \mathbf{l}}
    w, \ \hdots,\ \overline{\mathbf{j}'_{1}}\hdots\overline{\mathbf{j}'_{r-1}\mathbf{j}'_r \mathbf{l}}
    w.
\end{align}
\end{thrm}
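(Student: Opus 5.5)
The plan is an induction that peels letters off the right end of the codes, since the rightmost letter acts first on $v$ (resp.\ $w$). Before the induction I record how one operator acts. Write $a=(a_0,\dots,a_m)$ in the basis $e_0,\dots,e_m$. By \eqref{linear_j}, $b=\overline{\mathbf j}a$ has components
\[
b_{i-1}=a_0+a_i \ (1\le i\le j-1),\qquad b_{j-1}=a_0,\qquad b_i=a_0+a_i \ (j\le i\le m).
\]
Three facts follow.
\begin{enumerate}[(i)]
\item Since all $a_i>0$, the smallest component of $b$ is $a_0$, sitting at position $j-1$. The $m$ largest components of $b$ form the multiset $\{a_0+a_1,\dots,a_0+a_m\}$, which does not depend on $j$.
\item The sum $\sum_i b_i=(m+1)a_0+\sum_{i\ge1}a_i$ also does not depend on $j$.
\item The letter $j$ only decides which positions of $b$ are cyclically shifted, and where the ``new minimum'' $a_0$ is placed. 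This is exactly the information that is lost by passing from $a$ to an $s$-set $a^{(s)}$ when $s\ge j$.
\end{enumerate}

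\textbf{Step 1: an $s$-set lemma.} If $v^{(s)}=w^{(s)}$ and $j\le s$, then $\overline{\mathbf j}v$ and $\overline{\mathbf j}w$ again have equal $s'$-sets for a suitable $s'$ (I expect $s'=s-1$ or $s$), and their $m$ largest components coincide. This lets me discard a common tail of the two codes. Equal tails produce identical weight contributions and keep the hypothesis of Theorem \ref{main_theorem_rephrased} intact. The lujr rule is what guarantees that the relevant letters never exceed the current $s$.

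\textbf{Step 2: the first differing step.} After Step 1 I reduce to the situation in the statement: the first operators applied are $\overline{\mathbf k}$ and $\overline{\mathbf l}$ with $l>k$. By (i), the weights produced by this step agree. However, the components of $u=\overline{\mathbf k}v$ and $u'=\overline{\mathbf l}w$ compare as follows:
\begin{itemize}
\item they agree in positions $0,\dots,k-2$, because $v_i=w_i$ for $i<k$;
\item at position $k-1$ we get $u_{k-1}=v_0$, while $u'_{k-1}=w_0+w_k>v_0$;
\item the remaining positions of $u$ and $u'$ are related by a controlled permutation.
\end{itemize}
So the difference has become a ``defect'': $u$ and $u'$ coincide on an initial segment, and $u$ carries a strictly smaller entry at the first position where they differ.

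\textbf{Step 3: propagating the defect.} I will formulate a strengthened inductive statement about such pairs $(u,u')$ and show it is preserved by any further lujr-admissible letters $\mathbf j_r,\dots,\mathbf j_1$ and $\mathbf j'_r,\dots,\mathbf j'_1$. Concretely, I would track the sequence of minima $a_0$ at each stage, since by (i) each minimum is $|w(Z_1^s)|$. Together with (ii), this sequence determines the sum of all weights at every stage. The aim is to show that the two minimum sequences differ. Then either the total weight sums differ, or some intermediate multiset of $m$ largest components differs. Either way the weight sequences, and hence by the remarks before the statement the small growth vectors, are distinct.

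The main obstacle is Step 3. After the first difference the two codes may continue with unrelated letters, and by (i)--(ii) neither the multiset of weights nor the sums see a single letter directly. What I would try first is a lexicographic dominance invariant on the position-ordered vectors: agreement on an initial block followed by strict inequality. I would show it survives every $\overline{\mathbf j}$ allowed by lujr, using that the new $e_0$-component is $a_0+a_1$ and that lujr bounds how far the shifted block can reach. If that invariant does not survive arbitrary continuations, I would fall back to a secondary induction on the length $r$. In that version I compare the first stage at which the minima differ, and use the $s$-set lemma of Step 1 to handle the stages before it.
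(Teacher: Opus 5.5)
\textbf{There is a genuine gap: the core step is not proved.} Your facts (i)--(ii) are correct, and Step 2 is exactly the paper's Lemma~\ref{equal}. The reduction you sketch in Step 3 also works. The component sum of $\overline{\mathbf j}a$ is $m a_0+\sum_i a_i$, so the totals of the two chains agree until the first stage at which their $0$-th components differ. At that stage the $m$ largest components of the next vector sum to $(m-1)a_0+\sum_i a_i$, and these sums then differ.

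This only helps once you know that the $0$-th components diverge at a stage \emph{strictly before} the leading letter is applied. That is precisely what you leave open. Since (i)--(ii) show nothing else distinguishes the weights, this is the whole content of the theorem.

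The invariant you try first does not survive. Suppose the vectors agree on positions $0,\dots,s-1$ and the codes continue with letters $a>b$, $b\le s$. Then the new first discrepancy is at position $b-1$, with $(\overline{\mathbf a}\tilde v)_{b-1}=\tilde v_0+\tilde v_b>\tilde w_0=(\overline{\mathbf b}\tilde w)_{b-1}$, so the sign flips. For example, with $m=4$ the codes $\mathbf{1\,2\,2\,3\,3}$ and $\mathbf{1\,2\,3\,2\,4}$ give $(2,2,1,2,2)$ and $(2,2,2,1,2)$ after one step, and $(4,3,2,4,4)$ and $(4,2,4,3,4)$ after two. Your Step 1 remark that lujr keeps letters below the current $s$ also points the wrong way. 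Lujr is needed to guarantee that letters \emph{above} the current depth do occur.

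\textbf{The missing idea.} Drop the sign and track only the agreement depth $\mathrm{pr}(\tilde v,\tilde w)=s$: the first $s$ entries are equal, the remaining multisets are equal, and $\tilde v_s\ne\tilde w_s$. The component formula gives three facts (Lemma~\ref{obserwacje} and cases (I)--(III) of the paper):
\begin{itemize}
\item $\mathrm{pr}$ never increases;
\item it is preserved only when both codes apply the same letter $j\le s$;
\item it drops by at least one when the letters differ or a common letter exceeds $s$.
\end{itemize}
After the pair $k<l$ one has $\mathrm{pr}=k-1$.

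Now use lujr on the code containing $l$. It contains $k,k-1,\dots,2$ at successively earlier positions. Each of these is applied while $\mathrm{pr}$ is below that letter, so each forces a drop. Hence $\mathrm{pr}$ hits $0$ no later than at the first occurrence of $2$, with at least the leading $1$ still to come. If $k=1$ it hits $0$ immediately, and $l\ge2$ is not the leading letter. That is the stage your sum argument needs.

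\textbf{A caveat on $m$.} Your sum argument requires $m\ge2$. For $m=1$ the coefficient $m-1$ vanishes, and the statement actually fails there: $v=w=(1,1)$ with codes $\mathbf{1\,1}$ and $\mathbf{1\,2}$ both yield the weights $2,3$, consistent with Engel's theorem. The paper's proof has the same implicit restriction.
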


In the Appendix \ref{app:proof} we shall introduce numerous definitions to prove Theorem \ref{main_theorem_rephrased} and show how it implies Theorem \ref{main_theorem}.

\section{Weights of tangential Goursat flags}\label{sec:5}
After the general result of the previous section, we restrict our attention to the Goursat case.
Goursat distributions have been investigated for more than a century now since the works of E. von Weber \cite{vonWeber} and E. Cartan \cite{CartanSurLA}. They owe their name to E. Goursat, who wrote a treatise \cite{Goursat1923LeonsSL} which popularized those objects. 
The seminal work of Kumpera and Ruiz \cite{giaro1978lectureKR, KumperasurlequivalencedePfaff} showed that Goursat distributions exhibit singularities. The first geometric characterization of these singularities can be found in \cite{Bryant1993RigidityOI}. These singularities are geometric and can be encoded in the small growth vector \cite{Jean_1996}, later sharpened in \cite{SGV_Mor_2004}. When it comes to the characterization by a small growth vector, the great simplification was brought by Jean and the recursions he introduced using a kinematic model of a car with trailers \cite{Jean_1996}.

In the language of special multi-flags, Goursat distributions are $1$-flags with the restriction that distinct codes start with \emph{two letters} $1$ due to Engel’s theorem \cite{engel1890}. In this case, EKR normal forms reduce to, so-called, Kumpera-Ruiz (KR) normal forms \cite{giaro1978lectureKR, KumperasurlequivalencedePfaff} whose polynomial construction is well-known and can be described inductively. The construction of \eqref{Z_construction_start}-\eqref{Z_construction_finish} after the appearance of the letter ${\bf k}\in \{1,2\}$ simplifies to
\begin{align}
    Z_1'&=\begin{cases}Z_1+(x+c^l)Z_{2},  &\text{if } {\bf k}=1,\\
    xZ_1+Z_{2},  &\text{if } {\bf k}=2,
    \end{cases}\label{Goursat_construction}\\
    Z_2'&=\partial_{x},
\end{align}
defined around $0\in \mathbb{R}^{l+1}(y_1,\hdots,y_{l+1},x)$. First, we note that the constants $c^l$ in \eqref{Goursat_construction} associated with the prefix consisting of first letters ${\bf 1}$ can be always put zero by a polynomial transformation on the jet \cite{vonWeber}.

We restrict to tangential germs, which are strongly nilpotent \cite{SGV_Mor_2004}.
\begin{dfntn}\label{definition_tangential}
    A KR class is called \emph{tangential} if all constants $c^l$ in \eqref{Goursat_construction} vanish. Tangential (non-tangential) germs are elements of tangential (non-tangential) classes.
\end{dfntn}

To relate weights to the derived function, recall that for a rank--$2$ Goursat
distribution the small Lie flag increases dimension by at most $1$.
In adapted coordinates $(x_1,\dots,x_{r+2})$ one has
a weight sequence $(w_1,\dots,w_{r+2})$ with $w_1=w_2=1$, and the successive
differences
\begin{align}\label{eq:weights-from-derived}
    d_j := w_{j+1}-w_j,\qquad j=2,\dots,r+1,
\end{align}
form Jean's derived vector \cite{SGV_Mor_2004} (equivalently, the derived function with
multiplicities). Conversely, once the derived vector $(d_2,\dots,d_{r+1})$ is
known, the weights are recovered by the recurrence
\begin{align}
w_1=w_2=1,\qquad w_{j+1}=w_j+d_j, \qquad j=2,\dots,r+1.
\end{align}
For tangential germs the KR coordinates are privileged/adapted in the sense of
Bellaïche, so this reconstruction applies directly to the KR normal form.

KR normal forms of tangential germs are characterized only by their encoding. Following the notation of \cite{Mormul2011SmallGV} adapted to our setting, let us consider a KR normal form with a code having $k_s$ letters ${\bf 1} $ at the beginning, then a letter ${\bf 2}$, then $k_s$, and so on so that it has $s+1$ letters ${\bf 2}$:
\begin{align}
    {\bf 1}^{k_{s+1}} {\bf 2}\ {\bf 1}^{k_{s}}{\bf 2} \hdots {\bf 2}\ {\bf 1}^{k_{0}}\end{align}
where $k_j\geq 0$ for $j=0,1,\hdots,s$, and $k_{s+1}\ge 2$.
Proposition 3.2 of \cite{SGV_Mor_2004} states that the derived function takes the form
\begin{align}\label{eqn:der_function_def}
    \mathrm{der} = (\underbrace{A_0,\hdots A_0}_{2+k_0 \text{ times}},\underbrace{A_1,\hdots A_1}_{1+k_1\text{ times}},A_2,\hdots A_{s},\underbrace{A_{s+1},\hdots A_{s+1}}_{ k_{s+1}-1 \text{ times}}),
\end{align}
where components $A_j$ can be determined from the following theorem from \cite{SGV_Mor_2004}, which was proven in  \cite{Mormul2011SmallGV}.
\begin{thrm}[{\cite[Theorem 3.3]{SGV_Mor_2004}}]\label{An_noncompact_form_of_thm2}
    The components of the derived function $A_0,A_1,A_2,A_3,\hdots, A_s, A_{s+1}$ in \eqref{eqn:der_function_def} appear with the multiplicities 
    \begin{align}
        2+k_0,\ 1+k_1,\ 1+ k_2, \hdots, 1+k_s, \ \text{ and } \ k_{s+1}-1 \quad \text{times},
    \end{align}
    where the sequence $A_j$ is determined by the following recursion
    \begin{align}\label{A_recursion_bezG0}
        A_0&=1,\\
        A_1&=2+k_0,\label{A_recursion_bezG1} \\
        A_j&=A_{j-2}+(1+k_{j-1})A_{j-1}\label{A_recursion_bezG2}.
    \end{align}
\end{thrm}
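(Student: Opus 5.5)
The plan is to reprove Theorem \ref{An_noncompact_form_of_thm2} from the weight recursion of Section \ref{sec:4}, specialised to $m=1$. For $m=1$ the operators \eqref{linear_j} act on pairs $(a,b)=(-w(Z_1^s),w(x^s))$. The letter $\mathbf 1$ acts by $\overline{\mathbf 1}(a,b)=(a,a+b)$, and the letter $\mathbf 2$ acts by $\overline{\mathbf 2}(a,b)=(a+b,a)$. Tangential germs have all $c^l=0$, so by Theorem \ref{thm:multiflags_and_nilpotency} they are homogeneous and strongly nilpotent. Hence the KR coordinates are adapted, and the derived vector \eqref{eq:weights-from-derived} is exactly the sequence of successive differences of the coordinate weights. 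These weights are read off, as in Theorem \ref{main_theorem_rephrased}, as the larger component of the successive vectors $v,\ \overline{\mathbf j_r}v,\ \overline{\mathbf j_{r-1}}\overline{\mathbf j_r}v,\dots$ with $v=(1,1)$. The code is processed from its last letter backwards, so the blocks are met in the order $\mathbf 1^{k_0},\mathbf 2,\mathbf 1^{k_1},\mathbf 2,\dots$. This matches the order of the $A_j$ in \eqref{eqn:der_function_def}.

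The key steps, in order, are as follows.

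\emph{Monotonicity.} I first check that the larger component strictly increases at every step. For $\overline{\mathbf 1}$ the new larger entry is $a+b>b$. For $\overline{\mathbf 2}$ it is $a+b>\max(a,b)$, because all entries are positive. So the weights appear already sorted, and each entry of the derived function is just the increment of the larger component at one step.

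\emph{Increments.} For the state $(a,b)$ with $b\ge a$, I record that a letter $\mathbf 1$ produces increment $a$ and leaves the first component fixed. Hence the increment is constant along a block of $\mathbf 1$'s. A letter $\mathbf 2$ produces the increment $(a+b)-b=a$, and then resets the first component to $a+b$.

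\emph{Invariant.} I prove by induction on $j$ that just before the $j$-th letter $\mathbf 2$ (counted from the right) the state is $(A_{j-1},\,A_{j-2}+k_{j-1}A_{j-1})$. The base case is the initial stretch: starting from $(1,1)$, the block $\mathbf 1^{k_0}$ gives $(1,1+k_0)$. For the inductive step, the $\mathbf 2$ sends this state to
\begin{align*}
(A_{j-2}+(1+k_{j-1})A_{j-1},\,A_{j-1})=(A_j,\,A_{j-1}),
\end{align*}
which is exactly recursion \eqref{A_recursion_bezG2}. The following block $\mathbf 1^{k_j}$ then gives $(A_j,\,A_{j-1}+k_jA_j)$, which closes the induction. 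The same computation on the first $\mathbf 2$ gives $A_1=1+(1+k_0)=2+k_0$, matching \eqref{A_recursion_bezG1}.

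\emph{Multiplicities.} The increment $A_{j-1}$ appears once for each letter of the block $\mathbf 1^{k_{j-1}}$ and once more for the following $\mathbf 2$, which gives $1+k_{j-1}$ copies. For $A_0$ one extra copy must be added: the initial vector $(1,1)$ contributes the two weight-$1$ coordinates $w_1=w_2=1$, and the first step out of it already has increment $1$. This gives $2+k_0$.

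The main obstacle I expect is the bookkeeping at the two ends of the code, and in particular the last block $\mathbf 1^{k_{s+1}}$ with multiplicity $k_{s+1}-1$. Here one must match the convention of \eqref{eq:weights-from-derived} precisely: the derived vector is indexed only by $j=2,\dots,r+1$, and the first letter of the code does not contribute a new coordinate in the same way, because of the Engel normalisation and the role of the coordinate $x^0_0$. I would settle this by writing out the case $s=0$, for example the code $\mathbf{1}\mathbf{1}\mathbf{2}\mathbf{1}$, explicitly. That fixes which step is lost at the front, and the general case then follows from the invariant above.
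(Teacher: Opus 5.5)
The paper does not prove this statement; it quotes it from \cite{SGV_Mor_2004}, with the proof in \cite{Mormul2011SmallGV}. Your derivation from the $m=1$ case of the EKR weight recursion is therefore a genuinely different, self-contained route, and much of it is sound. In particular, the following are all correct:
\begin{itemize}
\item the formulas for $\overline{\mathbf 1}$ and $\overline{\mathbf 2}$;
\item reading the weights as $1$ together with the larger components of the successive vectors;
\item the strict monotonicity of the larger component;
\item your invariant, that the state before the $j$-th $\mathbf 2$ is $(A_{j-1},A_{j-2}+k_{j-1}A_{j-1})$, which yields \eqref{A_recursion_bezG2}.
\end{itemize}

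The gap is in the increment and multiplicity step. On a state $(a,b)$ either letter produces the increment $(a+b)-\max(a,b)=\min(a,b)$. Your own invariant shows that right after the $j$-th $\mathbf 2$ the state is $(A_j,A_{j-1})$ with $A_j>A_{j-1}$, so your hypothesis $b\ge a$ fails exactly there. Hence the letter processed immediately after the $j$-th $\mathbf 2$ contributes $A_{j-1}$, not $A_j$. Two of your claims are therefore false:
\begin{itemize}
\item a block $\mathbf 1^{k_j}$ with $j\ge1$ is \emph{not} constant; it gives $A_{j-1},A_j,\dots,A_j$;
\item when $k_j=0$, the $(j+1)$-th $\mathbf 2$ contributes $A_{j-1}$, not $A_j$.
\end{itemize}
For example, for $\mathbf 1\mathbf 1\mathbf 2\mathbf 2$ the states are $(1,1),(2,1),(3,2),(3,5),(3,8)$, with increments $1,1,2,3$. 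The second $\mathbf 2$ gives $A_0=1$, and the first $\mathbf 1$ of the last block gives $A_1=2$.

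Your two end corrections are also fictitious.
\begin{itemize}
\item By \eqref{eq:weights-from-derived} the derived vector starts with $d_2=w_3-w_2$, which is the increment of the first processed letter. So $w_1=w_2=1$ contribute no extra copy of $A_0$; note $w_2-w_1=0$ is not an entry.
\item Nothing is lost at the left end. Each of the $r$ letters, including $\mathbf j_1$, adds exactly one coordinate and one entry $d_j$: there are $r+2$ coordinates and $r$ entries. The Engel normalisation and $x^0_0$ play no role here.
\end{itemize}
These spurious $+1$ and $-1$ happen to cancel the one-step shift above. That is why your planned check on $\mathbf 1\mathbf 1\mathbf 2\mathbf 1$ (increments $1,1,1,3$) would seem to confirm your bookkeeping. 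As written, though, the multiplicities are obtained from false per-letter claims.

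The repair is short; count with $\min(a,b)$:
\begin{itemize}
\item In the first group $\mathbf 1^{k_0}\mathbf 2$ every state is $(1,1+t)$, giving $k_0+1$ copies of $A_0$.
\item After the $j$-th $\mathbf 2$, the next letter gives $A_{j-1}$. Every further letter up to and including the $(j+1)$-th $\mathbf 2$ gives $A_j$, because the second component is then at least $A_j+A_{j-1}$.
\item In the last block $\mathbf 1^{k_{s+1}}$, the first letter gives $A_s$ and the remaining $k_{s+1}-1$ letters give $A_{s+1}$.
\end{itemize}
This yields contiguous blocks with multiplicities $2+k_0,1+k_1,\dots,1+k_s,k_{s+1}-1$, totalling $r$. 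The $-1$ in the last multiplicity comes from this shift, not from a lost step.
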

The important feature of this recursion is that it can be solved to obtain formulas for $A_j$ only in terms of the sequence $k_j$, a result that we prove in Appendix \ref{app:recursion}.
\begin{thrm}\label{An_compact_form_of_thm2}
The recursion \eqref{A_recursion_bezG0}-\eqref{A_recursion_bezG2} is solved by the following sequence
    \begin{align}\label{Aj_formulas}
        A_j = 1+\sum_{{\substack{0\leq i_1<...<i_p<j\\i_1\not\equiv i_2,...,i_{p-1}\not\equiv i_p,i_p\not\equiv j(\text{mod }2)}}} \prod_{l=1}^p(1+k_{i_l}).
    \end{align}
\end{thrm}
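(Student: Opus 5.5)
Since the right-hand side of \eqref{Aj_formulas} is an explicit expression in the $k_i$, the plan is to show by strong induction on $j$ that it satisfies the initial conditions \eqref{A_recursion_bezG0}, \eqref{A_recursion_bezG1} and the recursion \eqref{A_recursion_bezG2}. Write $S_j$ for the set of admissible index tuples $0\le i_1<\dots<i_p<j$ whose consecutive entries have alternating parity and whose last entry satisfies $i_p\not\equiv j \pmod 2$. The empty tuple ($p=0$) contributes the leading $1$, with the empty product equal to $1$. Write $\pi(I)=\prod_{l}(1+k_{i_l})$, so that $A_j=\sum_{I\in S_j}\pi(I)$ once the empty tuple is counted in $S_j$.

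The initial values are immediate. For $j=0$ only the empty tuple exists, so $A_0=1$. For $j=1$ the admissible tuples are the empty one and $(0)$, since $0\not\equiv 1$; this gives $A_1=1+(1+k_0)=2+k_0$.

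For $j\ge 2$, I will split $S_j$ according to whether the tuple contains $j-1$, which is its largest possible entry and has the right parity.
\begin{itemize}
\item If $j-1\notin I$, then every entry is at most $j-2$ and the last entry satisfies $i_p\not\equiv j\equiv j-2$. These are exactly the tuples of $S_{j-2}$, the empty tuple included, so this part contributes $A_{j-2}$.
\item If $j-1\in I$, then it is the last entry, and removing it leaves a tuple whose entries are below $j-1$ and whose last entry has parity different from $j-1$. That is exactly an element of $S_{j-1}$, empty tuple included, and the map is a bijection. This part therefore contributes $(1+k_{j-1})A_{j-1}$.
\end{itemize}
Adding the two parts gives \eqref{A_recursion_bezG2}.

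The main point needing care is the bookkeeping of the parity conditions. In the first case I must check that no tuple with all entries at most $j-2$ is lost or double counted. Such a tuple with last entry $j-2$ is excluded from both $S_j$ and $S_{j-2}$ by parity, and entries equal to $j-1$ are handled by the second case. I also have to confirm that the empty tuple is counted exactly once in each $S_j$, which is what makes the constant $1$ consistent with the recursion. Beyond this, the proof is a routine combinatorial decomposition.
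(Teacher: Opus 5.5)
Your proposal is correct and is essentially the paper's proof. Both arguments check $j=0,1$ directly. Both then show that the index tuples split according to whether the top admissible index $j-1$ occurs: tuples ending there give $(1+k_{j-1})A_{j-1}$, and the rest give $A_{j-2}$, with $i_p=j-2$ excluded by parity. The only difference is that the paper runs this regrouping from the recursion's right-hand side toward the closed formula, while you decompose the index set of the closed formula.
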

\begin{nt}
    Analogous recursive relations \cite[Theorem 3.4]{SGV_Mor_2004} and formulas can be generalized also to the non-tangential case. This is, however, much more laborious and formulas become more complicated, therefore we don't treat it in this work focused on the strongly nilpotent case.
\end{nt}

The closed formula in Theorem \ref{An_compact_form_of_thm2} also yields explicit formulas for the coordinate weights. Indeed, by~\eqref{eq:weights-from-derived}, the weights
are obtained by taking cumulative sums of the entries of the derived vector.
Let $\mu_j$ denote the multiplicity of $A_j$ in \eqref{eqn:der_function_def}, so that
\begin{align}
\mu_0 &= 2+k_0, \\
\mu_j &= 1+k_j, \qquad 1\leq j\leq s, \\
\mu_{s+1} &= k_{s+1}-1.
\end{align}
For $0\leq n\leq s+1$, define
\begin{align}
q_n := \sum_{j=0}^{n}\mu_j,
\qquad
S_n := \sum_{j=0}^{n}\mu_j A_j.
\end{align}
Then $S_n$ is the sum of all entries in the
first $n+1$ blocks of the derived vector. Consequently,
\begin{align}\label{eq:weights_in_S1}
w_{q_n+2}=1+S_n.
\end{align}
More generally, the weights within the block corresponding to $A_n$ are
given by
\begin{align}\label{eq:weights_in_S2}
w_{q_{n-1}+2+t}=1+S_{n-1}+tA_n,
\qquad
1\leq t\leq \mu_n,
\end{align}
and $w_1=w_2=1.$    
Thus, the formulas above together with Theorem \ref{An_compact_form_of_thm2}, lead to explicit expressions for
all coordinate weights.
\begin{thrm}\label{thm:sums_of_Goursats}
    For $0\leq n\leq s$, the sum $S_n$ of all entries of the derived vector corresponding to $A_0,\ldots,A_n$ in the derived vector is
    \begin{align}\label{sum_less_s}
        S_n = 1+\sum_{{\substack{0\leq i_1<...<i_p\leq n\\i_1\not\equiv i_2,...,i_{p-1}\not\equiv i_p(\text{mod }2)}}} \prod_{l=1}^p(1+k_{i_l}).
    \end{align}
The sum of all entries of the derived vector is
       \begin{align}
        S_{s+1}=k_{s+1} +k_{s+1}\cdot \sum_{{\substack{0\leq i_1<...<i_p<s+1\\i_1\not\equiv i_2,...,i_{p-1}\not\equiv i_p,i_p\not\equiv s+1(\text{mod }2)}}} \prod_{l=1}^p(1+k_{i_l}) + \sum_{{\substack{0\leq i_1<...<i_p<s+1\\i_1\not\equiv i_2,...,i_{p-1}\not\equiv i_p,i_p\equiv s+1(\text{mod }2)}}} \prod_{l=1}^p(1+k_{i_l}).
    \end{align}
\end{thrm}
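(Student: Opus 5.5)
The plan is to derive both formulas from Theorem \ref{An_compact_form_of_thm2} by telescoping, using the recursion \eqref{A_recursion_bezG2} in the form
\begin{align*}
(1+k_{j})A_{j}=A_{j+1}-A_{j-1}\qquad (1\le j\le s).
\end{align*}
For the first formula I would prove by induction on $n$, for $0\le n\le s$, that
\begin{align*}
S_n=A_{n+1}+A_n-1 .
\end{align*}
\emph{Base case $n=0$.} Here $S_0=(2+k_0)A_0=2+k_0=A_1=A_1+A_0-1$.
\emph{Inductive step.} For $1\le n\le s$ we have $\mu_n=1+k_n$, so
\begin{align*}
S_n=S_{n-1}+(1+k_n)A_n=(A_n+A_{n-1}-1)+(A_{n+1}-A_{n-1})=A_{n+1}+A_n-1 .
\end{align*}
Note that $A_{n+1}$ with $n+1\le s+1$ is still given by the recursion, since the recursion runs up to $j=s+1$.

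Next I would insert \eqref{Aj_formulas} for $A_{n+1}$ and $A_n$. The sum in $A_{n+1}$ ranges over alternating-parity index chains in $[0,n]$ whose last index $i_p\not\equiv n+1$, i.e. $i_p\equiv n \pmod 2$. The sum in $A_n$ ranges over chains in $[0,n-1]$ with $i_p\not\equiv n$. Every alternating chain in $[0,n]$ has its last element either $\equiv n$ or $\not\equiv n$. In the second case that element cannot equal $n$, so the chain lies in $[0,n-1]$. The two index sets are therefore disjoint and their union is exactly the set of all alternating-parity chains in $[0,n]$. The constants give $1+1-1=1$, and this yields \eqref{sum_less_s}.

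The one point I must check carefully is whether the empty chain ($p=0$) is counted inside the sums or only through the leading $1$. The convention has to be consistent with $A_0=1$ and $A_1=2+k_0$: for $j=1$ the only nonempty chain is $\{0\}$, contributing $1+k_0$, so the empty chain must be excluded from the sum. With that convention the constant bookkeeping above is correct.

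For $S_{s+1}$, the last block has multiplicity $\mu_{s+1}=k_{s+1}-1$ and $A_{s+1}=A_{s-1}+(1+k_s)A_s$. Then
\begin{align*}
S_{s+1}=S_s+(k_{s+1}-1)A_{s+1}=A_s+k_{s+1}A_{s+1}-1 .
\end{align*}
Writing $A_{s+1}=1+\Sigma_{\not\equiv}$, where $\Sigma_{\not\equiv}$ is the sum over chains in $[0,s]$ with $i_p\not\equiv s+1$, gives $k_{s+1}A_{s+1}=k_{s+1}+k_{s+1}\Sigma_{\not\equiv}$; these are the first two terms of the claimed formula. It remains to identify $A_s-1$ with the sum over chains in $[0,s]$ with $i_p\equiv s+1 \pmod 2$. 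Such chains have $i_p\not\equiv s$, which forces $i_p<s$, and this is exactly the index set of $A_s$ from \eqref{Aj_formulas}. This finishes the argument.

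The main obstacle is only this index and parity bookkeeping, in particular the empty-chain convention and the boundary between $i_p<j$ and $i_p\le n$. I will verify both on small cases, for example $s=0,1$, against the recursion.
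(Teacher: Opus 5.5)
Your proof is correct, but it takes a different route from the paper's.

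\textbf{The paper's route.} The paper never telescopes. It substitutes the closed form of Theorem \ref{An_compact_form_of_thm2} into every term of $S_n=1+(1+k_0)+\sum_{j=1}^n(1+k_j)A_j$. It then observes that $(1+k_j)A_j$ is exactly the sum over alternating chains whose last index equals $j$: append $i_{p+1}=j$ to each chain counted in $A_j$, with the leading $1$ giving the singleton $\{j\}$. Summing over $j\le n$ therefore regroups all alternating chains in $[0,n]$ by their maximum. For $S_{s+1}$ it adds $(k_{s+1}-1)A_{s+1}$ to this full chain sum and splits the latter by the parity of $i_p$.

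\textbf{Your route.} You use only the recursion to obtain $S_n=A_{n+1}+A_n-1$. You then need the closed form at just two consecutive indices, and a single parity split does all the combinatorial work. Your bookkeeping is right: the empty chain is excluded, and $i_p\equiv s+1$ forces $i_p<s$. The displayed recursion for $A_{s+1}$ in your last paragraph is unused, and it is undefined for $s=0$, but nothing depends on it.

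\textbf{What each approach buys.} Your intermediate identity is independent of the closed form and gives the weights very cleanly. By \eqref{eq:weights_in_S1}, $w_{q_n+2}=A_n+A_{n+1}$ for $0\le n\le s$, and the top weight is $1+S_{s+1}=A_s+k_{s+1}A_{s+1}$. The paper's route instead gives each block sum $(1+k_j)A_j$ a direct combinatorial meaning. It also reuses exactly the chain-extension step from the proof of Theorem \ref{An_compact_form_of_thm2}, without invoking the recursion again.
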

Theorem \ref{thm:sums_of_Goursats} together with equations \eqref{eq:weights_in_S1}-\eqref{eq:weights_in_S2} and Theorem \ref{An_compact_form_of_thm2} gives exact formulas for weights.

\section{The question of strong nilpotency}\label{sec:6}
We have shown that, within the homogeneous EKR normal forms, the
code of a singularity class is distinguished by the weights: different lujr codes give
different weight sequences, hence different small growth vectors and
inequivalent germs. In the Goursat case, we have also obtained explicit
formulas for the weights of tangential classes, building on the work of
Jean and Mormul. These results clarify the strongly nilpotent part of
the theory, but they also point to a remaining question: how large is
the homogeneous locus inside the full strongly nilpotent locus?

In this paper we have restricted to EKR normal forms in which all
constants $c^l_j$ appearing in the normal form \eqref{Z_construction_start} vanish\footnote{As we commented before, constants corresponding to the prefix of 1's can be put to zero without any loss of generality.}.
When all these constants are zero, the generators $Z_1,\ldots,Z_{m+1}$ are homogeneous with respect to the weights, and therefore coincide with their
nilpotent approximations. Hence, the corresponding distribution germ is strongly nilpotent. Thus the vanishing of all constants is a sufficient condition for strong nilpotency.

The converse implication is the open point. We formulate it as the following conjecture.
\begin{cnjctr}\label{con:multi}
A special $m$-flag germ in the singularity class $\underbrace{{\bf 1 1... 1}}_{l-1}\underbrace{\j_l\j_{l+1} }_{j_l>1}... \j_r $ is strongly nilpotent if and only if
\begin{equation}
\begin{aligned}
c^{l}_{j_l} &= \cdots = c_m^l = 0,\\
c^{l+1}_{j_{l+1}} &= \cdots = c_m^{l+1} = 0,\\
&\vdots\\
c^{r}_{j_r} &= \cdots = c_m^r = 0.
\end{aligned}
\label{eq:vanishing-c}
\end{equation}
\end{cnjctr}
The $ \impliedby$ part was proven in \cite{Mormul_2004}. In fact, this claim is still conjectural even for Goursat flags that have been studied for many decades now. We rephrase the still unproven conjecture of \cite{MomrulGoursatdim7}.
\begin{cnjctr}\label{con:Goursat}
Goursat germs are strongly nilpotent if and only if they are tangential.
\end{cnjctr}
So far all examples confirm the Conjecture \ref{con:Goursat} \cite{Mormulnotexceedingsix, Mormul_DoModuli, MomrulGoursatdim7} but one would probably need new methods to prove the full assertion.  The case of general multi-flags would be even more complicated as, even in $m=2$, the structure of local moduli is surprising in many ways \cite{mormul2024localmodulispecial2flags}. We hope that results such as \cite{Mormulnotexceedingsix, Mormul_DoModuli, MomrulGoursatdim7} point in the right direction and our results are valid for all strongly nilpotent multi-flags.

\appendix

\section{Number of codes satisfying least upward jump rule}\label{app:combi}
In this appendix, we investigate code labeling singularity classes. We determine the number of words satisfying the least upward jumps rule (lujr) defined in Definition \ref{def_lujr}. A code made of letters $ \{1,2,\hdots, m,m+1\}$ is said to satisfy the least upward jumps rule if it starts with $1$, and any letter $l>1$ can appear only if the letter $l-1$ appeared before.
This allows us to determine the number of singularity classes that are encoded by such words.

First, we can consider codes over the two-letter alphabet $\{1,2\}$. The lujr rule is always satisfied if the word starts with 1, so the number of lujr words of the length $r$ is
\begin{align}
    N(1,r) = 2^{r-1}.
\end{align}
This is closely related to the number of Kumpera-Ruiz classes of Goursat flags which is $ 2^{r-2}$. The discrepancy is due to the Engel theorem for Goursat distributions of corank $r$ that forces $\mathbf{j}_2=1$ which has no analog for $m>1$ (see Proposition 1 of \cite{Mormul2004MultidimensionalCP}). The formula  $N(1,r)$ is a point of departure to study general ranks, where the number of lujr codes reflects symmetries of the stratification \cite{Mormul_2023_multi}, and more transparent construction of normal forms \cite{Mormul2004MultidimensionalCP}.
The task of finding the number $N(m,r)$ of codes over $(m+1)$-letter alphabet is important to better understand the structure of the \emph{monster tower} \cite{Mormul2022MonsterTF}. It was shown in \cite{Mormul2022MonsterTF} that
\begin{align}
    N(2,r)&= \frac{1}{3!}3^r+\frac{1}{2}, &r\geq 3,\label{eqn:example_N2r}\\
    N(3,r)&=\frac{1}{4!}4^r+ \frac{1}{4}2^r+\frac{1}{3}, &r\geq 4,\\
     N(4,r)&= \frac{1}{5!}5^r+ \frac{1}{12}3^r+ \frac{1}{6}2^r+\frac{3}{8}, &r\geq 5.\label{eqn:example_N4r}
\end{align}
These numbers suggest a pattern that is clear among leading and subleading coefficients of $n^r$.  In fact, \cite{Mormul2022MonsterTF} showed that the following asymptotic holds
\begin{align}
    \lim_{r\rightarrow \infty}\frac{N(m,r)}{(m+1)^r}=\frac{1}{(m+1)!}.
\end{align}

In the following theorem, we give an explicit closed formula for the numbers $N(m,r)$.
\begin{thrm}
The number of the lujr codes of length $r$ over the $(m+1)$-letter alphabet is
    \begin{align}\label{number_of_lujr}
       N(m,r)=\sum_{k=0}^{m}\left\{\frac{(m+1-k)^r}{(m+1-k)!}\cdot \sum_{n=0}^k\frac{(-1)^n}{n!}\right\}. 
    \end{align}
\end{thrm}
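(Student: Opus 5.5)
The plan is to reduce the count to Stirling numbers of the second kind and then reorganize the standard explicit formula for them.

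First I will set up a bijection between lujr codes of length $r$ over $\{1,\dots,m+1\}$ and set partitions of $\{1,\dots,r\}$ into at most $m+1$ nonempty blocks. Given a code $j_1\cdots j_r$, I put positions $a$ and $b$ in the same block iff $j_a=j_b$. Conversely, given a partition, I order its blocks by their smallest elements and give every position in the $i$-th block the letter $i$. The lujr condition of Definition \ref{def_lujr} says exactly that a new letter, when it first appears, is one more than the current maximum. So the code is a restricted growth string, and the two maps are mutually inverse. A code using exactly $j$ distinct letters corresponds to a partition into exactly $j$ blocks. Hence
\begin{align*}
N(m,r)=\sum_{j=1}^{m+1} S(r,j),
\end{align*}
where $S(r,j)$ are Stirling numbers of the second kind. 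For $r\ge 1$ we have $S(r,0)=0$, so the sum may start at $j=0$.

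Next I will use the inclusion--exclusion formula for surjections, $j!\,S(r,j)=\sum_{i=0}^{j}(-1)^i\binom{j}{i}(j-i)^r$, in the form
\begin{align*}
S(r,j)=\sum_{i=0}^{j}\frac{(-1)^i}{i!}\,\frac{(j-i)^r}{(j-i)!}.
\end{align*}
Summing over $0\le j\le m+1$ gives a double sum over pairs $(i,a)$ with $a=j-i\ge 0$, $i\ge 0$ and $a+i\le m+1$. Exchanging the order of summation yields
\begin{align*}
N(m,r)=\sum_{a=0}^{m+1}\frac{a^r}{a!}\sum_{i=0}^{m+1-a}\frac{(-1)^i}{i!}.
\end{align*}
The $a=0$ term vanishes since $r\ge 1$. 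Substituting $a=m+1-k$ with $k=0,\dots,m$ turns the inner upper limit into $k$, which is exactly \eqref{number_of_lujr}.

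As a sanity check I will verify $m=1$: the formula gives $2^r/2+(1-1)=2^{r-1}$, in agreement with $N(1,r)$. For $m=2$ it gives $3^r/6+0\cdot 2^r/2+\tfrac12=3^r/6+\tfrac12$, matching \eqref{eqn:example_N2r}.

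The only step needing care is the bijection. I must check that the lujr rule, with its requirement that the code starts with $1$, forces the canonical "order blocks by first occurrence" labelling, so that no partition is counted twice. Otherwise the argument is a routine reindexing.
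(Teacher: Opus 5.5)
Your proof is correct, and it takes a more direct route than the paper's, although the two share the same combinatorial core. Both rest on the fact that every word has a unique lujr representative, obtained by relabelling letters in order of first occurrence. You use this as the bijection with restricted growth strings, which gives $N(m,r)=\sum_{j=1}^{m+1}S(r,j)$. The paper uses it as an orbit count under permutations of the alphabet, which gives $n^r=\sum_{k}\frac{n!}{(n-k)!}\,a_k$, where $a_k$ is the number of lujr codes with exactly $k$ distinct letters (so $a_k=S(r,k)$, though the paper never names them). The two routes differ in the inversion step. The paper posits $N(m,r)=\sum_k s_k\frac{(m+1-k)^r}{(m+1-k)!}$ with coefficients independent of $m$. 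It then matches coefficients of the $a_k$ to get the triangular recursion $s_k=1-\sum_{l<k}s_l/(k-l)!$ and solves it with the generating function $e^{-x}/(1-x)$. You instead import the inclusion--exclusion formula for surjections, which is exactly the inversion of the $e^{x}$-convolution (the factor $e^{-x}$). Your summation over $j\le m+1$, followed by the exchange of sums, then produces the partial sums of that series (the factor $1/(1-x)$). So the underlying algebra is the same. Your version is shorter and needs no ansatz or recursion, and naming $N(m,r)$ as a partial Bell number makes facts such as $N(m,r)/(m+1)^r\to 1/(m+1)!$ immediate from the dominant term $S(r,m+1)$. The paper's version, by contrast, is self-contained and rederives the Stirling inversion from scratch.
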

\begin{proof}
    First, observe that the largest letter $k$ appearing in a given code $\mathcal{C}$ is equal to the number of different characters appearing in the code $\mathcal{C}$ because it is lujr ($\mathcal{C}$ contains all $k,k-1,\hdots,2,1$).

    The second observation is that there are $(m+1-k)!$ permutations of the set of characters $\{1,2,\hdots, m+1\}$ that leave the code $\mathcal{C}$ (with the largest character $k$) lujr. This is because any nontrivial permutation of the set $\{1,2,\hdots, k\}$ spoils the lujr property, which requires order in the first appearances of $\{1,2,\hdots, k\}$, while letters $k+1,\hdots,m,m+1$ do not appear in $\mathcal{C}$ and can be permuted in any way.

    Let us write
    \begin{align}\label{eqn-sum_with_a}
        N(m,r)=a_{m+1}+a_{m}+\hdots +a_2+a_1,
    \end{align}
    where $a_k$ is the number of the lujr words with $k$ being precisely the largest character.

    There are exactly $(m+1)^r$ words over the alphabet in question. A word with $k$ different characters is a fixed point of $ (m+1-k)!$ permutations. Every word can be transformed by a permutation of the alphabet into the lujr word. Therefore, we can count all words by elements on orbits of the permutation group
    \begin{align}
        (m+1)^r =\frac{(m+1)!}{0!}a_{m+1}+\frac{(m+1)!}{1!}a_{m}+\hdots+\frac{(m+1)!}{(m-1)!}a_{2}+\frac{(m+1)!}{m!}a_{1},
    \end{align}
    so equivalently
    \begin{align}\label{eqn:start_of_recursion_an}
        \frac{(m+1)^r}{(m+1)!}=a_{m+1}+a_m+\sum_{k=1}^{m-1}\frac{a_k}{(m+1-k)!}.
    \end{align}
    Drawing on the exemplary data \eqref{eqn:example_N2r} -\eqref{eqn:example_N4r}, we want to rewrite the sum or the RHS of \eqref{eqn-sum_with_a} as a combination of quantities $\frac{k^r}{k!}$, $k=1,2,\hdots,m+1$
\begin{align}\label{expansion_of_an_in_sn}
    a_{m+1}+a_m+\sum_{k=1}^{m-1}a_{m+1-k} = \sum_{k=0}^{m}s_k\frac{(m+1-k)^r}{(m+1-k)!}.
\end{align} 
In view of \eqref{eqn:start_of_recursion_an}, $s_0=1$ and $s_1=0$. To get hold of the remaining $s_k$'s, we expand the terms $\frac{(m+1-k)^r}{(m+1-k)!}$ as in \eqref{eqn:start_of_recursion_an} and stipulate the equality of the coefficients standing on both sides of \eqref{expansion_of_an_in_sn} at $ a_{m-1},a_{m-2},\hdots,a_{2},a_{1}$:
    \begin{align}\label{eqn-s_k_recursion}
    s_k&=1-\sum_{l=0}^{k-1}\frac{1}{(k-l)!}s_l.
\end{align}
    for, respectively, $k=2,3,\hdots, m-1,m$ and with $s_0=1,s_1=0$.
However, one can treat \eqref{eqn-s_k_recursion} as an infinite system of equations for unknown $s_k,k\geq 2$, where the dependence on $m$ disappears. There remains to solve this infinite recursive system of equations with the initial data  $s_0=1,s_1=0$. 
Introduce a generating function $S:= \sum_{n=0}^\infty s_n x^n$. The equation \eqref{eqn-s_k_recursion} can be rewritten as
\begin{align}
    x^k=s_kx^k+\sum_{l=0}^{k-1}\frac{x^{k-l}}{(k-l)!}s_lx^l.
\end{align}
Summing from $k=0 $ to infinity, we obtain a (convergent in $|x|<1$ due to $s_k\leq 1$) power series satisfying the equation
\begin{align}
    \frac{1}{1-x}= S + S\sum_{t:=k-l=1}^\infty\frac{x^t}{t!}=Se^x,
\end{align}
where we changed the order of summation between $k$ and $l$. The generating function is then 
\begin{align}
    S=\frac{e^{-x}}{1-x},
\end{align}
which allows us to write coefficients
\begin{align}
    s_k=\sum_{n=0}^k\frac{(-1)^n}{n!}.
\end{align}
With that, we obtain the formula
\begin{align}
       N(m,r)=\sum_{k=0}^{m}\left\{\frac{(m+1-k)^r}{(m+1-k)!}\cdot \sum_{n=0}^k\frac{(-1)^n}{n!}\right\}. 
    \end{align}
\end{proof}

\section{Proofs of Theorem \ref{main_theorem} and  Theorem \ref{main_theorem_rephrased}}\label{app:proof}
Before we proceed with the proof, let us introduce some auxiliary definitions that will play a role in certain steps later.
\subsection{\texorpdfstring{$s$-sets of vectors}{s-sets of vectors}}  \label{subsec:ssets}
We define $s$-set $V^{(s)}(A_0, A_1,...,A_{s-1};A_s,...A_{m})$ of vectors as a set of vectors with given values $A_0, A_1, \hdots,A_{m}$ up to a permutations of last $m+1-s$ components
\begin{align}
    v \in V^{(s)}(A_0,...,A_{s-1};A_s,...A_{m})\iff v=\begin{bmatrix}  A_0 \\A_1\\..\\A_{s-1}\\A_{s-1+\sigma(1)}\\\vdots\\A_{s-1+\sigma(l)} \\\vdots\\A_{s-1+\sigma(m-s+1)} \end{bmatrix} \text{ for a permutation }\sigma\in \Sigma_{m-s+1}.
\end{align}
For a given $(m+1)$-dimensional vector $v=(v_0,v_1,\hdots,v_m)$, let $v^{(s)}$ denote its $s$-set of vectors, so that
\begin{align}\label{s_set_of_vectors}
    v^{(s)} =\left\{w=(v_0,v_1,..,v_{s-1},v_{s-1+\sigma(1)},\hdots,v_{s-1+\sigma(m-s+1)} )\ | \ \sigma\in \Sigma_{m-s+1}\right\}.
\end{align}
For $s=0$ we interpret $v^{(0)}$ as the set of all permutations of $(v_0,\dots,v_m)$, i.e. equality $v^{(0)}=w^{(0)}$ means equality of multisets of components.

Next, we state some properties implied by this notation. 
\begin{prpstn}
Let $v$ and $w$ be two $(m+1)$-dimensional vectors. Then
    \begin{itemize}
\item $v^{(s)}=w^{(s)}$ and $v^{(s+1)}\neq w^{(s+1)}$ if and only if
\begin{align}
    v^{(s)}=w^{(s)}, \quad  v_s\neq w_s.
\end{align}
        \item $v$ and $w$ have the same set (with multiplicities) of values if and only if
\begin{align}
    v^{(0)}=w^{(0)}.
\end{align}
\item   $w\in v^{(s+1)}$ is stronger than $w\in v^{(s)}$: 
\begin{align}
    v^{(s+1)}=w^{(s+1)} \implies v^{(s)}=w^{(s)}.
\end{align}
    \item There is only one 1-element permutation
\begin{align}
     v^{(m)}= w^{(m)} \implies v^{(m+1)}=w^{(m+1)}.
\end{align}
\end{itemize}
\end{prpstn}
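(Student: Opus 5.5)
The plan is to reduce everything to one characterization of when two $s$-sets coincide, and then read off the four bullets from it. Directly from \eqref{s_set_of_vectors}, every element of $v^{(s)}$ has the same first $s$ components $v_0,\dots,v_{s-1}$. Its last $m+1-s$ slots carry a rearrangement of the multiset $\{v_s,\dots,v_m\}$. So the first step is to prove
\begin{align}
v^{(s)}=w^{(s)} \iff \Big(v_i=w_i \text{ for } 0\le i\le s-1\Big)\ \text{and}\ \{v_s,\dots,v_m\}=\{w_s,\dots,w_m\} \text{ as multisets}.
\end{align}
For ``$\Leftarrow$'', the right-hand side exhibits $w$ itself as an element of $v^{(s)}$. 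Since $v^{(s)}$ is exactly the orbit of any of its elements under $\Sigma_{m-s+1}$ acting on the last $m+1-s$ slots, two orbits sharing an element coincide. For ``$\Rightarrow$'', $v\in v^{(s)}=w^{(s)}$ means $v$ is obtained from $w$ by permuting the tail. This forces the prefixes to agree and the tail multisets to agree. For $s=0$ the characterization reduces to the stated convention: equality of the full multisets of components. That is precisely the second bullet, so the second bullet needs no further argument.

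The third bullet (monotonicity) then follows at once. If $v^{(s+1)}=w^{(s+1)}$, the prefixes agree up to index $s$, in particular $v_s=w_s$. Adding this common value to both equal tail multisets $\{v_{s+1},\dots,v_m\}=\{w_{s+1},\dots,w_m\}$ gives equal multisets $\{v_s,\dots,v_m\}=\{w_s,\dots,w_m\}$, hence $v^{(s)}=w^{(s)}$.

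For the first bullet, assume $v^{(s)}=w^{(s)}$ and show that, under this hypothesis, $v^{(s+1)}=w^{(s+1)}$ holds iff $v_s=w_s$. The ``only if'' direction is immediate from the characterization at level $s+1$. For ``if'', remove the common element $v_s=w_s$ from the equal multisets $\{v_s,\dots,v_m\}$ and $\{w_s,\dots,w_m\}$. By multiset cancellation the remainders $\{v_{s+1},\dots,v_m\}$ and $\{w_{s+1},\dots,w_m\}$ are equal. Together with the agreement of prefixes up to $s$, this gives $v^{(s+1)}=w^{(s+1)}$. Negating both sides yields the stated equivalence with $v^{(s+1)}\neq w^{(s+1)}$.

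For the last bullet, when $s=m$ the tail has a single slot and $\Sigma_1$ is trivial, so $v^{(m)}=\{v\}$. Equality $v^{(m)}=w^{(m)}$ therefore means $v=w$. Interpreting $v^{(m+1)}$ (empty tail) likewise as $\{v\}$, the implication is trivial.

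None of these steps is a genuine obstacle. The only point needing care is stating the orbit characterization cleanly, including the boundary conventions at $s=0$ and $s=m+1$, since the multiset-cancellation argument in the first bullet rests on it.
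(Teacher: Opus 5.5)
Your proof is correct and matches what the paper intends. The paper gives no separate argument and presents these bullets as immediate consequences of the definition of $v^{(s)}$. Your characterization is exactly the fact the paper uses implicitly in the two lemmas that follow: the first $s$ components agree, and the tail multisets satisfy $\{v_s,\dots,v_m\}=\{w_s,\dots,w_m\}$. You also handle the boundary cases $s=0$, $s=m$ and $s=m+1$ (via triviality of $\Sigma_1$ and $\Sigma_0$), which fills in details the paper leaves unstated.
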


\subsection{Matrices of the lujr codes}
Let us look at the matrices $\overline{\j_1 \j_2 . . . \j_r}$ associated with the action of lujr codes on a reference vector $e_0+e_1+\hdots+e_m$. 
The action of $\j$ on a basis defined in \eqref{linear_j} has the matrix\footnote{To simplify notation in this section, we will use $\j$ to denote the matrix $\overline{\j}$.}
\begin{align}
    \j v=  \begin{bmatrix}
1 & 1 & 0 & 0 & 0 & 0& 0 \\
1 & 0 & \hdots & 0 & 0 & 0& 0 \\
1 & 0 & 0 & 1 & 0& 0 & 0 \\
1 & 0 & 0 & 0 & 0& 0 & 0 \\
1 & 0 & 0& 0 & 1 & 0 & 0 \\
\hdots & 0 & 0 & 0& 0& \hdots & 0 \\
1 & 0 & 0 & 0& 0 & 0 & 1 
\end{bmatrix}  
 \begin{bmatrix}
v_0 \\
\hdots \\
v_{j-2} \\
v_{j-1} \\
v_j\\
 \hdots\\
v_m 
\end{bmatrix}  
\end{align}
which can be rewritten in terms of the vector's components
\begin{align}\label{eq:lujrmatrix}
\mathbf{j}(v)_k=
\begin{cases}
v_0 +v_{k+1}, & k<j-1,\\
v_0, & k=j-1, \\
v_0+v_k, & j\leq k\leq m.
\end{cases}
\end{align}

First, observe that for any $j\in\{1,\dots,m+1\}$ and any $v=(v_0,\dots,v_m)$, the vector $\j v$ has exactly one component equal to $v_0$ (namely $(jv)_{j-1}=v_0$), and all remaining components are of the form $v_0+v_i$ with $i\in\{1,\dots,m\}$.
Hence, the multiset of the $m$ largest components of $\j v$ is
\begin{align}\label{set_of_weights}
    \{\,v_0+v_1,\ v_0+v_2,\ \dots,\ v_0+v_m\,\},
\end{align}
independent of $j$.

We prove several elementary properties that will be used in the proof.

\begin{lmm}\label{obserwacje}
Let $v$ and $w$ satisfy $v^{(s)}=w^{(s)}, v^{(s+1)}\neq w^{(s+1)}.$ Then
\begin{enumerate}
    \item $ ( \j v)^{(s-1)}=(\j w)^{(s-1)},(\j v)^{(s)}\neq (\j w)^{(s)}$ for $j > s>0$,
    \item  $(\j v)^{(s)}=(\j w)^{(s)}, (\j v)^{(s+1)}\neq (\j w)^{(s+1)}$ for $j \leq s$,
    \item if $s=0$, $(\j v)^{(0)}\neq (\j w)^{(0)}$. 
\end{enumerate}
\end{lmm}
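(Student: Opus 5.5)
The plan is to verify all three items directly from the componentwise formula \eqref{eq:lujrmatrix}. First I will unpack the hypothesis. By the first item of the Proposition on $s$-sets, $v^{(s)}=w^{(s)}$ together with $v^{(s+1)}\neq w^{(s+1)}$ means three things:
\begin{itemize}
\item $v_i=w_i$ for $0\le i\le s-1$;
\item the multisets $\{v_s,\dots,v_m\}$ and $\{w_s,\dots,w_m\}$ coincide;
\item $v_s\neq w_s$.
\end{itemize}
For $s=0$ this reads: the multisets of all components agree, but $v_0\neq w_0$. Throughout, I use that $(\j v)_{j-1}=v_0$ and that every other component of $\j v$ is $v_0$ plus exactly one of $v_1,\dots,v_m$, each used once.

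For item 1 ($j>s>0$) I have $v_0=w_0$. For $k\le s-2$ we have $k<j-1$, so $(\j v)_k=v_0+v_{k+1}$ with $k+1\le s-1$, and these components agree with those of $\j w$. At $k=s-1$ we still have $k<j-1$ because $j>s$, so $(\j v)_{s-1}=v_0+v_s\neq v_0+w_s=(\j w)_{s-1}$. It then remains to check the tail multiset from index $s-1$ on. These components are $v_0$ (at position $j-1\ge s-1$) together with $v_0+v_i$ for $i=s,\dots,m$, each $i$ appearing once. This multiset is determined by $v_0$ and $\{v_s,\dots,v_m\}$, so it agrees for $v$ and $w$. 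This gives $(\j v)^{(s-1)}=(\j w)^{(s-1)}$ and $(\j v)^{(s)}\neq(\j w)^{(s)}$.

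For item 2 ($1\le j\le s$, which forces $s\ge1$, so again $v_0=w_0$) I run through the three ranges of $k$. For $k<j-1$ the component is $v_0+v_{k+1}$ with $k+1<j\le s$; at $k=j-1$ it is $v_0$; for $j\le k\le s-1$ it is $v_0+v_k$. All of these coincide for $v$ and $w$. At $k=s\ge j$ the component is $v_0+v_s\neq v_0+w_s$. For $k\ge s$ the components are $v_0+v_k$, whose multiset agrees by the tail hypothesis. Hence $(\j v)^{(s)}=(\j w)^{(s)}$ and $(\j v)^{(s+1)}\neq(\j w)^{(s+1)}$.

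Item 3 ($s=0$) is the only step needing an idea beyond bookkeeping, since index matching is no longer available. Here I compare sums of components. By the description above, $\sum_k(\j v)_k=(m+1)v_0+\sum_{i\ge1}v_i=m\,v_0+\sum_{i\ge0}v_i$. The total $\sum_{i\ge0}v_i$ is the same for $v$ and $w$ because their multisets agree, while $v_0\neq w_0$ and $m\ge1$. So the sums differ, and hence the multisets of components of $\j v$ and $\j w$ differ, i.e.\ $(\j v)^{(0)}\neq(\j w)^{(0)}$. I expect no real obstacle. The care points are the boundary cases $j=s+1$ in item 1, where the $v_0$ entry sits exactly at position $s$, and $s=m$, where $v^{(m+1)}\neq w^{(m+1)}$ just means $v_m\neq w_m$.
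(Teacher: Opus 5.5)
Your proof is correct and follows essentially the same route as the paper. Items 1 and 2 are an index-by-index comparison using \eqref{eq:lujrmatrix}, and item 3 uses the sum identity $\sum_k(\j v)_k=m\,v_0+\sum_{i\ge 0}v_i$; the only difference is that you verify the tail multisets directly, whereas the paper infers them from the equality of the full multisets $\{v_0,v_0+v_1,\dots,v_0+v_m\}=\{w_0,w_0+w_1,\dots,w_0+w_m\}$. One small remark: the boundary case $s=m$ you flag is actually vacuous, since $v^{(m)}=w^{(m)}$ already forces $v=w$ and hence $v^{(m+1)}=w^{(m+1)}$.
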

\begin{proof}
 We may assume $j\ge 2$ (the case $j=1$ is treated similarly, with $(\mathbf{1}v)_0=v_0$). From $v^{(s)}=w^{(s)}$ we get $v_0=w_0,\dots,v_{s-1}=w_{s-1}$.
Moreover, $v^{(s+1)}\neq w^{(s+1)}$ implies $v_s\neq w_s$.

    When $s>0$, the set of values of components of $\j v, \j w $ vectors is 
    \begin{align}
        \{v_0,v_0+v_1,\hdots,v_0+v_m\}=\{w_0,w_0+w_1,\hdots,w_0+w_m\}.\end{align}

    1. $j > s>0$. For $t=0,\hdots, s-2,$
    \begin{align}
        (\j v)_t=v_0+v_{t+1}=w_0+w_{t+1}= (\j w)_t.
    \end{align}
We also have
\begin{align}
    (\j v)_{s-1}=v_0+v_{s}=w_0+v_{s}\neq w_0+w_{s}= (\j w)_{s-1},
\end{align}
therefore, $(\j v)^{(s)}\neq (\j w)^{(s)}, ( \j v)^{(s-1)}=(\j w)^{(s-1)}$.

2. $j \leq s$. Then  
    \begin{align}
        &(\j v)_0=v_0+v_1=w_0+w_1= (\j w)_0, \ \hdots ,\ (\j v)_{j-2}=v_0+v_{j-1}=w_0+w_{j-1}= (\j w)_{j-2},\\
        &(\j v)_{j-1}=v_0=w_0= (\j w)_{j-1}, (\j v)_{j}=v_0+v_{j}=w_0+w_{j}= (\j w)_{j},\hdots \\ &\hdots (\j v)_{s-1}=v_0+v_{s-1}=w_0+w_{s-1}= (\j w)_{s-1}, 
         \end{align}
so $ ( \j v)^{(s)}=(\j w)^{(s)}$, but
\begin{align}
    (\j v)_{s}=v_0+v_{s}=w_0+v_{s}\neq w_0+w_{s}= (\j w)_{s},
\end{align}
which means $(\j v)^{(s+1)}\neq (\j w)^{(s+1)}$.

3. If $s=0$ then $v^{(0)}=w^{(0)}$ and $v_0\neq w_0$. The sum of components of $jv$ equals
$\sum_i (jv)_i = m v_0 + \sum_i v_i$ (and similarly $\sum_i (jw)_i = m w_0 + \sum_i w_i$),
hence $\sum_i (jv)_i \neq \sum_i (jw)_i$. Therefore $(jv)^{(0)}\neq (jw)^{(0)}$.
\end{proof}

It is also useful to consider two different letters acting on two vectors.
\begin{lmm}\label{equal}
Let $v^{(k)}=w^{(k)}$ and let $l>k$. Then
\begin{align}
    (\kk v)^{(k-1)}= (\ll w)^{(k-1)}
\qquad\text{and}\qquad
(\kk v)^{(k)}\neq (\ll w)^{(k)}.
\end{align}
\end{lmm}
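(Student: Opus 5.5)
The plan is to compute both vectors componentwise with \eqref{eq:lujrmatrix} and compare them index by index, as in the proof of Lemma \ref{obserwacje}. Two facts from the hypothesis $v^{(k)}=w^{(k)}$ will be used throughout. First, $v_t=w_t$ for $0\le t\le k-1$; since $k\ge 1$, this includes $v_0=w_0$. Second, the tails $(v_k,\dots,v_m)$ and $(w_k,\dots,w_m)$ are permutations of each other, so $v$ and $w$ have the same multiset of components.

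\emph{Step 1: multisets of all components.} By \eqref{eq:lujrmatrix}, the multiset of components of $\kk v$ is $\{v_0,\,v_0+v_1,\dots,v_0+v_m\}$. Likewise, the multiset of components of $\ll w$ is $\{w_0,\,w_0+w_1,\dots,w_0+w_m\}$, as observed before \eqref{set_of_weights}. Since $v_0=w_0$ and $\{v_1,\dots,v_m\}=\{w_1,\dots,w_m\}$ as multisets, these two multisets coincide. In particular $(\kk v)^{(0)}=(\ll w)^{(0)}$, which already settles the first claim when $k=1$.

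\emph{Step 2: the first $k-1$ entries.} Take $0\le t\le k-2$. Then $t<k-1$, and since $l>k$ also $t<l-1$. So the first case of \eqref{eq:lujrmatrix} applies to both vectors:
\begin{align}
(\kk v)_t=v_0+v_{t+1}=w_0+w_{t+1}=(\ll w)_t,
\end{align}
where we used $t+1\le k-1$. Thus the first $k-1$ components agree. Removing these common entries from the equal multisets of Step 1 leaves equal multisets for the remaining $m-k+2$ components. This is exactly $(\kk v)^{(k-1)}=(\ll w)^{(k-1)}$.

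\emph{Step 3: the $(k-1)$-st entry.} The letter $\kk$ puts $v_0$ in position $k-1$, so $(\kk v)_{k-1}=v_0$. Since $k-1<l-1$, the letter $\ll$ instead gives $(\ll w)_{k-1}=w_0+w_k$. Components are positive integers, so $w_k>0$ and hence $(\ll w)_{k-1}>w_0=v_0=(\kk v)_{k-1}$. By the first item of the Proposition in Subsection \ref{subsec:ssets}, equal $(k-1)$-sets together with differing $(k-1)$-st entries give $(\kk v)^{(k)}\neq(\ll w)^{(k)}$.

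The only delicate point is the index bookkeeping: checking that for $t\le k-2$ the same branch of \eqref{eq:lujrmatrix} applies to both letters, and that index $k-1$ falls in the middle branch for $\kk$ but in the first branch for $\ll$. Positivity of the components is what makes the final inequality strict.
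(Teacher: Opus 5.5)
Your proof is correct and follows the paper's argument essentially step for step: the paper also matches the first $k-1$ entries via the first branch of \eqref{eq:lujrmatrix}. It then uses that both vectors share the multiset $\{v_0,\,v_0+v_1,\dots,v_0+v_m\}$ to get equality of the $(k-1)$-sets, and separates the entries at index $k-1$ by $v_0<v_0+w_k$. Your Step 1 only makes explicit the multiset equality the paper asserts. The entry $w_k$ always exists, since $l>k$ and $l\le m+1$ force $k\le m$.
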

\begin{proof}
From $v^{(k)}=w^{(k)}$ we have $v_0=w_0,\dots,v_{k-1}=w_{k-1}$ and
$\{v_k,\dots,v_m\}=\{w_k,\dots,w_m\}$ as multisets.

For $t=0,\dots,k-2$ we have $t<k-1$ and also $t<l-1$, hence by \eqref{eq:lujrmatrix}
\begin{align}
    (\kk v)_t=v_0+v_{t+1}=w_0+w_{t+1}=(\ll w)_t.
\end{align}
Thus the first $k-1$ components of $\kk v$ and $\ll w$ coincide.

At the special index $k-1$ we have by \eqref{eq:lujrmatrix}
\begin{align}
(\kk v)_{k-1}=v_0,\qquad (\ll w)_{k-1}=w_0+w_k=v_0+w_k>v_0,    
\end{align}
so $(\kk v)^{(k)}\neq(\ll w)^{(k)}$.

Since $\kk v$ and $\ll w$ have the same multiset of components,
equality of the first $k-1$ entries implies $(\kk v)^{(k-1)}=(\ll w)^{(k-1)}$.
\end{proof}

\subsection{Proof}
First, we explain how Theorem \ref{main_theorem_rephrased} implies Theorem \ref{main_theorem}, before the proof Theorem \ref{main_theorem_rephrased} in the next part.

Since weights determine the small growth vector of the EKR form (and hence determine the strata), it is sufficient to show that sequences of weights associated with two classes, denoted by different lujr codes, are different.
Our goal, then, is to show that two different codes lujr give rise to different sequences. These codes must have a first (from the right) difference at some position, on which they have $k,$ and $l>k$.
We can rephrase it as having two codes  $...kO $ and $...lO $ with the same suffix $O$, give rise to vectors

\begin{align*}
    \textbf{kO}&\begin{bmatrix} 1 \\ ... \\ 1\end{bmatrix}, K_1 \textbf{kO}\begin{bmatrix} 1 \\ ... \\ 1\end{bmatrix},K_2K_1 \textbf{kO}\begin{bmatrix} 1 \\ ... \\ 1\end{bmatrix},\hdots\\
    \textbf{lO}&\begin{bmatrix} 1 \\ ... \\ 1\end{bmatrix}, L_1 \textbf{lO}\begin{bmatrix} 1 \\ ... \\ 1\end{bmatrix},L_2L_1 \textbf{lO}\begin{bmatrix} 1 \\ ... \\ 1\end{bmatrix},\hdots.
\end{align*}
Let us denote the common suffix $O$ acting on $e_0+\hdots+e_{m}$ as 
\begin{align}
    v=\textbf{O}&\begin{bmatrix} 1 \\ ... \\ 1\end{bmatrix}=w.\end{align}
Then $v=w,$ therefore $v^{(k)}=w^{(k)}$ for $k=0,1,\hdots, m+1$. For us, this means that the assumptions of the Theorem \ref{main_theorem_rephrased} are satisfied, so if we prove it, we will conclude with the following corollary.
\begin{crllr}\label{important_corollary}
 EKR normal form corresponding to different codes $\j'_1 \j'_2 \hdots \j'_r\neq \j_1 \j_2 \hdots \j_r$ have different sequences of weights. Here, by sequence of weights, we mean a sequence of $m$ largest components of vectors
 \begin{align}
     K_1v,\ K_2K_1v,\hdots \qquad \text{ and } \qquad  L_1v,\ L_2L_1v,\hdots,
 \end{align}
and vectors $v,w$ can be chosen as vectors obtained from $(e_0+e_1+\hdots+e_m)$ by the action of operators given by a common suffix of codes as well as first characters differentiating these codes.

Small growth vectors are common for all strongly nilpotent germs in a singularity class, and they can be obtained in the EKR normal form from a given class. Therefore singularity classes are inequivalent as the Theorem \ref{main_theorem} states.
\end{crllr}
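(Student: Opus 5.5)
The plan is to reduce the corollary to Theorem~\ref{main_theorem_rephrased}, and then pass from weight sequences to small growth vectors. Take two distinct lujr codes $\j_1\cdots\j_r\neq\j'_1\cdots\j'_r$. Let $p$ be the rightmost position where they differ, and relabel so that the letters there are $k$ and $l>k$. Write the common suffix as $O=\j_{p+1}\cdots\j_r$ and set $v=w=\mathbf{O}(e_0+\cdots+e_m)$. Since $v=w$, trivially $v^{(k)}=w^{(k)}$.

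The prefixes $\j_1\cdots\j_{p-1}k$ and $\j'_1\cdots\j'_{p-1}l$ are again lujr, because a prefix of a lujr word is lujr. Hence Theorem~\ref{main_theorem_rephrased} applies with these prefixes and these $v,w$. It yields that the sequences of $m$ largest components of $K_1v, K_2K_1v,\dots$ and of $L_1v,L_2L_1v,\dots$ differ. The weights coming from the common suffix, i.e.\ those read off from $(e_0+\cdots+e_m), \dots, v$, coincide for both codes. The full weight sequences of the two EKR forms therefore differ exactly in the part controlled by Theorem~\ref{main_theorem_rephrased}. This gives the first assertion.

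For the second assertion I will use three facts:
\begin{itemize}
\item For a homogeneous EKR form the coordinates are linearly adapted, so by \eqref{weight_def} we have $\dim D_s(0)=\#\{\text{coordinates of weight}\le s\}$. Thus the sgrv is exactly the distribution function of the multiset of weights.
\item The sgrv is a diffeomorphism invariant.
\item By Theorem~\ref{2023thm_unique} together with Theorem~\ref{thm:multiflags_and_nilpotency}, every strongly nilpotent homogeneous germ of class $j_1\cdots j_r$ is represented by the homogeneous EKR form $\j_1\cdots\j_r$, so the sgrv is constant on such a class.
\end{itemize}
Combining these, differing weight multisets give differing sgrv and hence inequivalent classes.

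The main obstacle is the gap between \emph{sequences} and \emph{multisets}. Theorem~\ref{main_theorem_rephrased} distinguishes ordered sequences, grouped by construction step, whereas the sgrv only sees the total multiset of weights. Two codes could a priori produce the same multiset with the weights redistributed among steps. I would first try to strengthen the conclusion by following the inductive mechanism of Lemmas~\ref{obserwacje} and~\ref{equal}.

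At the differing letter, Lemma~\ref{equal} gives $(\kk v)_{k-1}=v_0<v_0+w_k=(\ll w)_{k-1}$, while all other components agree as a multiset. The two vectors thus differ by replacing one entry $v_0+w_k$ by $v_0$, and the $\ll$-vector dominates after suitable sorting. The operators $\overline{\j}$ have nonnegative entries and are monotone under componentwise order up to the permutations allowed in $s$-sets. I would try to show inductively that the sorted weight vectors produced at each later step satisfy weak majorization-type domination, with strictness propagated by Lemma~\ref{obserwacje}. At $s=0$ the strictness becomes a strict inequality of component sums (part 3 of Lemma~\ref{obserwacje}).

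Summed over all steps, this should give strictly larger total weight for the $\ll$-code. Distinct sgrv's then follow, because $\sum_i w_i=\sum_s\big(n-\dim D_s(0)\big)$ is determined by the sgrv, where $n=(r+1)m+1$ is the dimension. If domination fails to propagate cleanly through the permutation freedom, the fallback is to compare the first step, counted from the end of the code, at which the sorted weight multisets differ, and argue directly from the structure \eqref{set_of_weights} of $m$ largest components.
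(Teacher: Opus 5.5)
Your reduction of the first assertion to Theorem~\ref{main_theorem_rephrased} is exactly the paper's: common suffix $O$, $v=w=\mathbf{O}(e_0+\dots+e_m)$, and prefixes that remain lujr. You are also right that the theorem only separates the blocks of $m$ largest components produced step by step, whereas the sgrv sees only the multiset of all weights; the paper takes this step for granted.

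However, the bridge you propose is false. Take $m=2$, $r=4$ and the codes $\mathbf{1231}$ and $\mathbf{1112}$. Their rightmost difference is at position $4$, with letters $1<2$, so $\mathbf{1112}$ is your $\ll$-code. Starting from $(1,1,1)$ one gets
\[
(1,1,1)\mapsto(1,2,2)\mapsto(3,3,1)\mapsto(6,3,4)\mapsto(6,9,10),\qquad (1,1,1)\mapsto(2,1,2)\mapsto(2,3,4)\mapsto(2,5,6)\mapsto(2,7,8).
\]
The weight multisets are therefore $\{1^3,2^2,3,3,4,6,9,10\}$, with total $42$, and $\{1^3,2^2,3,4,5,6,7,8\}$, with total $40$. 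So the $\ll$-code has the \emph{smaller} total weight. Its sorted list is not dominated in any majorization sense either; the partial sums of largest entries favour the $\kk$-code.

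The domination given by Lemma~\ref{equal} at the differing letter is destroyed once the subsequent letters differ. Here $\mathbf{3}$ moves $u_0+u_1$ into the first slot on the $\kk$-side while $\mathbf{1}$ keeps $u_0$ on the $\ll$-side, and from then on the $\kk$-side adds a larger first component at every step. Your fallback, comparing the first step at which the blocks differ, by itself says nothing about the total multiset. So the second assertion is left unproved.

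The missing idea is that the blocks are ordered. Every vector $u$ obtained from $(1,\dots,1)$ by applying letters satisfies
\[
u_0+\min_{1\le i\le m}u_i\;\ge\;\max_{0\le i\le m}u_i .
\]
This holds for $(1,\dots,1)$ and is preserved. If $u'=\overline{\j}u$ and $a=u_0$, then $\max u'=a+\max_{i\ge1}u_i$. Meanwhile $u'_0+\min_{i\ge1}u'_i$ equals $2a+\min_{i\ge1}u_i$ when $j=1$ and $2a+u_1$ when $j\ge2$. Both are at least $a+\max_{i\ge1}u_i$ by the inequality for $u$.

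By \eqref{set_of_weights}, the block produced from $u$ is $\{u_0+u_1,\dots,u_0+u_m\}$. Its minimum is therefore $\ge\max_i u_i$, hence $\ge$ every element of the previous block, which consists of entries of $u$. Consequently the sorted list of all $(r+1)m+1$ weights is $1^{m+1}$ followed by the sorted blocks in step order. Cutting it into consecutive chunks of length $m$ recovers each block.

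Thus the weight multiset, equivalently the sgrv via $\dim D_s(0)=\#\{\text{weights}\le s\}$, determines the step-grouped sequence. Theorem~\ref{main_theorem_rephrased} then gives distinct sgrv's.

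Be aware also that the theorem you invoke as a black box needs $m\ge 2$: its last step uses $(m-1)\tilde v_0\neq(m-1)\tilde w_0$. For $m=1$ the codes $\mathbf{111}$ and $\mathbf{121}$ give identical blocks $\{2\},\{3\},\{4\}$, consistent with Engel's theorem.
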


\begin{dfntn}
For two vectors $v,w\in\mathbb{Z}_{>0}^{m+1}$ define the \emph{prefix-agreement depth}
\begin{align}
    \mathrm{pr}(v,w):=\max\{\,s\in\{0,1,\dots,m\}\;:\; v^{(s)}=w^{(s)}\,\}.
\end{align}
Thus $\mathrm{pr}(v,w)=s$ means $v^{(s)}=w^{(s)}$ but $v^{(s+1)}\neq w^{(s+1)}$ (unless $s=m$).
\end{dfntn}

\begin{proof}[Proof of Theorem~\ref{main_theorem_rephrased}]
Let the two distinct lujr codes have the first (from the right) discrepancy at a pair of letters $k<l$
and share the same suffix $O$, so they are of the form
\[
P\,k\,O \qquad\text{and}\qquad Q\,l\,O,
\]
for (possibly empty) prefixes $P,Q$ on the left.

Set
\[
u:=\mathbf{O}\begin{bmatrix}1\\ \vdots\\ 1\end{bmatrix},\qquad v:=\kk u,\qquad w:=\ll u.
\]
Since $u^{(k)}=u^{(k)}$, Lemma~\ref{equal} (applied with  $k<l$) gives
\[
v^{(k-1)}=w^{(k-1)}\quad\text{and}\quad v^{(k)}\neq w^{(k)},
\]
so $\mathrm{pr}(v,w)=k-1$.

Now extend both words leftward step by step. At any intermediate stage we have vectors $\tilde v,\tilde w$
and next letters $a$ (in the first word) and $b$ (in the second word), giving
\[
\tilde v':=a\tilde v,\qquad \tilde w':=b\tilde w.
\]
Denote $s:=\mathrm{pr}(\tilde v,\tilde w)$, so $\tilde v^{(s)}=\tilde w^{(s)}$ and $\tilde v^{(s+1)}\neq \tilde w^{(s+1)}$.
First, we claim that $\mathrm{pr}$ never increases, and it strictly drops unless we are in the “stable” situation
$a=b\le s$.

\smallskip
\noindent\textbf{(I) $a=b=j\le s$.}

 $j\le s$ so Lemma~\ref{obserwacje}.2 yields $(\j\tilde v)^{(s)}=(\j\tilde w)^{(s)}$ and
$(\j\tilde v)^{(s+1)}\neq (\j\tilde w)^{(s+1)}$, hence $\mathrm{pr}(\tilde v',\tilde w')=s$.

\smallskip
\noindent\textbf{(II) $a=b=j>s$.}

 $j>s$ (and $s>0$) so Lemma~\ref{obserwacje}.1 yields $(\j\tilde v)^{(s-1)}=(\j\tilde w)^{(s-1)}$ and
$(\j\tilde v)^{(s)}\neq (\j\tilde w)^{(s)}$, hence $\mathrm{pr}(\tilde v',\tilde w')=s-1$.

\smallskip
\noindent\textbf{(III) $a\neq b$.}
Let $c:=\min(a,b)$ and $d:=\max(a,b)$.
\begin{itemize}
\item If $c\le s$, then $\tilde v^{(c)}=\tilde w^{(c)}$ (because $c\le s$), so Lemma~\ref{equal} applied to $c<d$ gives
\begin{align}
 (c\tilde v)^{(c-1)}=(d\tilde w)^{(c-1)}\quad\text{and}\quad (c\tilde v)^{(c)}\neq(d\tilde w)^{(c)},   
\end{align}
hence $\mathrm{pr}(\tilde v',\tilde w')=c-1\le s-1$.
\item If $c>s$, then in fact $a>s$ and $b>s$. For indices $0\le r\le s-2$ we have $r<a-1$ and $r<b-1$, so by the explicit formula
for the action of a letter,
\begin{align}
(a\tilde v)_r=\tilde v_0+\tilde v_{r+1}=\tilde w_0+\tilde w_{r+1}=(b\tilde w)_r,    
\end{align}
since $\tilde v_0=\tilde w_0$ and $\tilde v_{r+1}=\tilde w_{r+1}$ for $r+1\le s-1$.
But at $r=s-1$ we still have $s-1<a-1$ and $s-1<b-1$, hence
\begin{align}
 (a\tilde v)_{s-1}=\tilde v_0+\tilde v_s\neq \tilde w_0+\tilde w_s=(b\tilde w)_{s-1},   
\end{align}
because $\tilde v_s\neq \tilde w_s$. Therefore $\mathrm{pr}(\tilde v',\tilde w')=s-1$.
\end{itemize}

\smallskip
So, as we move leftward, $\mathrm{pr}$ is nonincreasing and drops by at least $1$ whenever we are in the case  (II) or in the case (III).

Notice, however, that after the first distinct pair $k,l$ appeared, the index pr is at the value $k-1$. Then, because of the lujr property of the suffix of the second code $Q$, we will reach letter $k$ in the second code:
\begin{align*}
    \mathbf{P}'\j'\tilde{v},\qquad   \mathbf{Q}'\kk \tilde{w}.
\end{align*}
 Because pr is not increasing, $\mathrm{pr}(\tilde v,\tilde w)<k$ at the letter $k$. This means that the case must be either (II) or (III) so we end up with sequences
\begin{align*}
    \mathbf{P}'\tilde{v}',\qquad   \mathbf{Q}' \tilde{w}', \qquad \mathrm{pr}(\tilde v',\tilde w')<k-1,
\end{align*}
and we know that $Q'$ is lujr and $k-1\in Q'$. We can repeat this reasoning to conclude that $\mathrm{pr}$ will reach $0$ before the beginning of the word (which is the letter $1$).

\smallskip
Now let $s_*$ be the first stage at which $\mathrm{pr}$ reaches $0$. At that stage we have $\tilde v^{(0)}=\tilde w^{(0)}$ but
$\tilde v^{(1)}\neq \tilde w^{(1)}$, i.e.\ the two vectors have the same multiset of components but different first component.

Crucially, the stage $s_*$ cannot be the very beginning of the words,
so there is at least one more letter to be applied after stage $s_*$.

Apply the next letter to obtain $\tilde v_+$ and $\tilde w_+$. The multiset of the $m$ largest components of $\tilde v_+$ equals
\[
\{\tilde v_0+\tilde v_1,\dots,\tilde v_0+\tilde v_m\},
\]
according to \eqref{set_of_weights} and similarly for $\tilde w_+$ 
\[
\{\tilde w_0+\tilde w_1,\dots,\tilde w_0+\tilde w_m\}.
\] Because $\tilde v_0\neq \tilde w_0$, and $\tilde v^{(0)}=\tilde w^{(0)}$, these two multisets differ as their sums are different 
\begin{align}
    \sum_{i=1}^m(\tilde v_0+\tilde v_i)=(m-1)\tilde v_0+\sum_{i=0}^m \tilde v_i\neq (m-1)\tilde w_0+\sum_{i=0}^m \tilde v_i=(m-1)\tilde w_0+\sum_{i=0}^m \tilde w_i=\sum_{i=1}^m(\tilde w_0+\tilde w_i)
,
\end{align}
hence the corresponding weights differ. Therefore the sequences of weights produced by the two codes cannot coincide, proving the theorem.
\end{proof}

\section{Proof of Theorem \ref{An_compact_form_of_thm2} and Theorem \ref{thm:sums_of_Goursats}}\label{app:recursion}
\begin{proof}[Proof of Theorem~\ref{An_compact_form_of_thm2}]
 The identity will be proven inductively. First, one can see that for $j=0$, the set of indices is empty, and for $j=1$, it contains only one element $i_1=0$, so formulas \eqref{Aj_formulas}
    \begin{align}
        A_0=1,\qquad A_1=1+(1+k_0)
    \end{align}
    agree with the \eqref{A_recursion_bezG0}-\eqref{A_recursion_bezG1}.

    Next, we for the general $j$, the recursion  \eqref{A_recursion_bezG2} for $j+1$ can be written as
    \begin{align}
    A_{j+1}&=A_{j-1}+A_{j}(1+k_j)\\
    &=1+\sum_{{\substack{0\leq i_1<...<i_p<j-1\\i_1\not\equiv i_2,...,i_{p-1}\not\equiv i_p (\text{mod }2),\\i_p\not\equiv j-1\equiv j+1(\text{mod }2)}}} \prod_{l=1}^p(1+k_{i_l}) + (1+k_j)+(1+k_j)\sum_{{\substack{0\leq i_1<...<i_p<j\\i_1\not\equiv i_2,...,i_{p-1}\not\equiv i_p,i_p\not\equiv j(\text{mod }2)}}}\prod_{l=1}^p(1+k_{i_l})\\
    &1+\sum_{{\substack{0\leq i_1<...<i_p<j-1\\i_1\not\equiv i_2,...,i_{p-1}\not\equiv i_p,i_p\not\equiv j+1(\text{mod }2)}}} \prod_{l=1}^p(1+k_{i_l}) +\sum_{{\substack{0\leq i_1<...<i_p=j\\i_1\not\equiv i_2,...,i_{p-2}\not\equiv i_{p-1},i_{p-1}\not\equiv j=i_p(\text{mod }2)}}}\prod_{l=1}^p(1+k_{i_l}) \label{mid-equality_rec1}\\
    &=1+\sum_{{\substack{0\leq i_1<...<i_p<j+1\\i_1\not\equiv i_2,...,i_{p-1}\not\equiv i_p,i_p\not\equiv j+1(\text{mod }2)}}} \prod_{l=1}^p(1+k_{i_l}),
\end{align}
where the equation between the second and the third line is adding an index $i_{p+1}=j$ is the second sum, also for $p=0$ ($1$- representing an empty product). In the line \eqref{mid-equality_rec1}, the first term represents the sum over indices with $i_p<j-1$ and the second term the ones with $i_p=j$, which together are equal to the expression \eqref{Aj_formulas} for $j+1$.
\end{proof}

\begin{proof}[Proof of Theorem~\ref{thm:sums_of_Goursats}]
First, $n\leq s$:
    \begin{align}S_n&=(2+k_0)A_0+\sum_{j=1}^{n}A_j(1+k_j)\\
    &=1+(1+k_0)+\sum_{j=1}^{n}(1+k_j)\biggl(1+\sum_{{\substack{0\leq i_1<...<i_p<j\\i_1\not\equiv i_2,...,i_{p-1}\not\equiv i_p,i_p\not\equiv j(\text{mod }2)}}} \prod_{l=1}^p(1+k_{i_l}) \biggr)\\
    &=1+\sum_{j=1}^{n}\sum_{{\substack{0\leq i_1<...<i_p=j\\i_1\not\equiv i_2,...,i_{p-1}\not\equiv i_p(\text{mod }2)}}} \prod_{l=1}^p(1+k_{i_l}) \\
    &=1+\sum_{j=0}^{n}\sum_{{\substack{0\leq i_1<...<i_p=j\\i_1\not\equiv i_2,...,i_{p-1}\not\equiv i_p(\text{mod }2)}}} \prod_{l=1}^p(1+k_{i_l}) \\
    &=1+\sum_{{\substack{0\leq i_1<...<i_p\leq n\\i_1\not\equiv i_2,...,i_{p-1}\not\equiv i_p(\text{mod }2)}}} \prod_{l=1}^p(1+k_{i_l}).
\end{align}
Above, we fixed the last index $i_p$ in the multiindex $(i_1,\hdots, i_p)$ to be equal to $j$.

When $n=s+1$, we add $A_{s+1}$ with its multiplicity
\begin{align}
    &S_{s+1}=S_s+ (k_{s+1}-1)A_{s+1} \\ 
    &=1+(k_{s+1}-1)+\sum_{{\substack{0\leq i_1<...<i_p\leq s\\i_1\not\equiv i_2,...,i_{p-1}\not\equiv i_p(\text{mod }2)}}} \prod_{l=1}^p(1+k_{i_l}) +(k_{s+1}-1)\sum_{{\substack{0\leq i_1<...<i_p<s+1\\i_1\not\equiv i_2,...,i_{p-1}\not\equiv i_p,i_p\not\equiv s+1(\text{mod }2)}}}  \prod_{l=1}^p(1+k_{i_l})\\
    &=k_{s+1} +k_{s+1}\cdot \sum_{{\substack{0\leq i_1<...<i_p<s+1\\i_1\not\equiv i_2,...,i_{p-1}\not\equiv i_p,i_p\not\equiv s+1(\text{mod }2)}}}   \prod_{l=1}^p(1+k_{i_l}) + \sum_{{\substack{0\leq i_1<...<i_p<s+1\\i_1\not\equiv i_2,...,i_{p-1}\not\equiv i_p,i_p\equiv s+1(\text{mod }2)}}}  \prod_{l=1}^p(1+k_{i_l}).
\end{align}
\end{proof}

\vspace{2em}
\noindent{\bf Acknowledgments:}
I am deeply grateful to Piotr Mormul for his guidance, help and continued support, and in particular for a key simplification of the proof of Theorem~\ref{main_theorem_rephrased}. Special thanks go to Michał Jóźwikowski for numerous helpful suggestions.

\bibliographystyle{plain}
\bibliography{literature}

@InProceedings{Mormul_DoModuli,
author = {Mormul, P.},
editor={Brasselet, J. P.
and Ruas, M. A. S.},
year = {2007},
month = {01},
pages = {229-246},
title = {Do Moduli of Goursat Distributions Appear on the Level of Nilpotent Approximations?},
booktitle={Real and Complex Singularities},
isbn = {978-3-7643-7775-5},
doi = {10.1007/978-3-7643-7776-2_17}
}

@incollection{Lafferriere1993,
author="Lafferriere, G.
and Sussmann, H. J.",
editor="Li, Zexiang
and Canny, J. F.",
title="A Differential Geometric Approach to Motion Planning",
bookTitle="Nonholonomic Motion Planning",
year="1993",
publisher="Springer US",
address="Boston, MA",
pages="235--270",
isbn="978-1-4615-3176-0",
doi="10.1007/978-1-4615-3176-0_7",
url="https://doi.org/10.1007/978-1-4615-3176-0_7"
}

@article{Mormul_2023_multi, title={Singularity classes of special multi-flags, {I}}, volume={16}, DOI={10.15673/pigc.v16i2.2336}, number={2}, journal={Proceedings of the International Geometry Center}, author={Mormul, P.}, year={2023}, pages={142–160}}

@article{Mormul_2004, title={Multi-dimensional {C}artan prolongation and special K–flags}, volume={65}, DOI={10.4064/bc65-0-12}, number={1}, journal={Banach Center Publications}, author={Mormul, P.}, year={2004}, pages={157–178}}

@article{SGV_Mor_2004, title={Geometric classes of {G}oursat flags and the arithmetics of their encoding by small growth vectors}, volume={2}, DOI={10.2478/bf02475982}, number={5}, journal={Central European Journal of Mathematics}, author={Mormul, P.}, year={2004}, pages={859–883}}

@article{Mormul2011SmallGV,
  title={Small growth vectors of the compactifications of the contact systems on {$J^r(1,1)$}},
  author={Mormul, P.},
  journal={Contemporary Mathematics},
  volume = {569},
  year={2012},
  pages={123-141},
}

@article{Jean_1996,
     author = {Jean, F.}, 
     title = {The car with $N$ trailers: characterisation of the singular configurations},
     journal = {ESAIM: Control, Optimisation and Calculus of Variations},
     pages = {241--266},
     publisher = {SMAI (Soci\'et\'e de math\'ematiques appliqu\'ees et industrielles)},
     address = {Paris},
     volume = {1},
     year = {1996},
     zbl = {0874.93033},
     mrnumber = {1411581},
     language = {en},
     url = {http://www.numdam.org/item/COCV_1996__1__241_0/}
}

@article{Zhitomirskii1991NormalFO,
  title={Normal forms of germs of smooth distributions},
  author={M. Zhitomirskii},
  journal={Mathematical notes of the Academy of Sciences of the USSR},
  year={1991},
  volume={49},
  pages={139-144}
}

@article{AgrachevMarigo03,
  title={Nonholonomic tangent spaces: intrinsic construction and rigid dimensions},
  author={Agrachev, A. and Marigo, A.},
  journal={Electronic Research Announcements of The American Mathematical Society},
  year={2003},
  volume={9},
  pages={111-120},
  url={https://api.semanticscholar.org/CorpusID:51820248}
}

@article{Mormul2005KRalgebrasoptimal,
author = {Mormul, P.},
year = {2005},
month = {04},
pages = {1614-1629},
title = {{K}umpera-{R}uiz algebras in {G}oursat flags are optimal in small lengths},
volume = {126},
journal = {Journal of Mathematical Sciences},
doi = {10.1007/s10958-005-0051-0}
}

@article{giaro1978lectureKR,
  title={Sur la lecture correcte d’un r{\'e}sultat d’Elie Cartan},
  author={Giaro, A. and Kumpera, A. and Ruiz, C.},
  journal={CR Acad. Sci. Paris},
  volume={287},
  pages={241--244},
  year={1978}
}

@incollection{Bellaiche96,
author="Bella{\"i}che, A.",
editor="Bella{\"i}che, Andr{\'e}
and Risler, Jean-Jacques",
title="The tangent space in sub-{R}iemannian geometry",
bookTitle="Sub-Riemannian Geometry",
year="1996",
publisher="Birkh{\"a}user Basel",
address="Basel",
pages="1--78",
isbn="978-3-0348-9210-0",
doi="10.1007/978-3-0348-9210-0_1"
}

@article{Zhitomirskii1995SingularitiesAN,
  title={Singularities and normal forms of smooth distributions},
  author={M. Zhitomirskii},
  journal={Banach Center Publications},
  year={1995},
  volume={32},
  pages={395-409}
}

@article{mormul2024localmodulispecial2flags,
    title = {Local moduli in the special 2-flags of length 5},
    journal = {Differential Geometry and its Applications},
    volume = {104},
    pages = {102402},
    year = {2026},
    issn = {0926-2245},
    doi = {https://doi.org/10.1016/j.difgeo.2026.102402},
    url = {https://www.sciencedirect.com/science/article/pii/S0926224526000732},
    author = {Mormul, P.}
}

@InProceedings{MomrulGoursatdim7,
author={Mormul, P.},
editor="Zinober, A.
and Owens, D.",
title="{G}oursat distributions not strongly nilpotent in dimensions not exceeding seven",
booktitle="Nonlinear and Adaptive Control",
year="2003",
publisher="Springer Berlin Heidelberg",
address="Berlin, Heidelberg",
pages="249--261",
isbn="978-3-540-45802-9"
}

@article{Mormul_2009Singularity2flags,
   title={Singularity Classes of Special 2-Flags},
   ISSN={1815-0659},
   url={http://dx.doi.org/10.3842/SIGMA.2009.102},
   DOI={10.3842/sigma.2009.102},
   journal={Symmetry, Integrability and Geometry: Methods and Applications},
   publisher={SIGMA (Symmetry, Integrability and Geometry: Methods and Application)},
   author={Mormul, P.},
   year={2009}}

@article{Mormulnotexceedingsix,
author = {Mormul, P.},
journal = {Zeszyty Naukowe Uniwersytetu Jagiellońskiego. Universitatis Iagellonicae Acta Mathematica},
language = {eng},
pages = {15-29},
publisher = {Wydawnictwo Uniwersytetu Jagiellońskiego},
title = {Minimal nilpotent bases for {G}oursat distributions of coranks not exceeding six.},
url = {http://eudml.org/doc/127468},
volume = {1278(42)},
year = {2004},
}

@article{Agrachev2001ONTS,
  title={ON THE SUBANALYTICITY OF {C}ARNOT-{C}ARATHEODORY DISTANCES},
  author={A. A. Agrachev and J. P. Gauthier},
  journal={Annales De L Institut Henri Poincare-analyse Non Lineaire},
  year={2001},
  volume={18},
  pages={359-382},
  url={https://api.semanticscholar.org/CorpusID:28989925}
}

@article{engel1890,
  title={Zur Invariantentheorie der Systeme von {P}faffschen Gleichungen},
  author={Engel, F.},
  journal={Berichte Ges. Leipzig, Math-Phys. Classe},
  year={1889},
  volume={XLI},
  pages={157–176}
}

@article{vonWeber,
  title={Zur Invariantentheorie der Systeme {P}faff’scher Gleichungen},
  author={E. von Weber},
  journal={Berichte Ges. Leipzig, Math-Phys. Classe},
  year={1898},
  volume={L},
  pages={207–229},
}

@article{CartanSurLA,
  title={Sur l'{\'e}quivalence absolue de certains syst{\`e}mes d'{\'e}quations diff{\'e}rentielles et sur certaines familles de courbes},
  author={E. Cartan},
  journal={Bulletin de la Soci{\'e}t{\'e} Math{\'e}matique de France},
  year={1914},
  volume={42},
  pages={12-48},
  url={https://api.semanticscholar.org/CorpusID:124775040}
}

@article{KumperasurlequivalencedePfaff,
author = {Kumpera, A. and Ruiz, C.},
year = {1982},
month = {01},
pages = {},
title = {Sur l’équivalence locale des systèmes de {P}faff en {D}rapeau},
journal = {Monge-Ampère Equations and Related Topics}
}

@article{Kumpera2002MultiflagSA,
  title={Multi-flag systems and ordinary differential equations},
  author={A. Kumpera and J. L. Rubin},
  journal={Nagoya Mathematical Journal},
  year={2002},
  volume={166},
  pages={1 - 27}
}

@book{Goursat1923LeonsSL,
  title={Leçons sur le probleme de {P}faff},
  author={E. Goursat},
  publisher={Paris Libraire Scientifuque },
  year={1922},
  address ={Paris} ,
editor={J. Hermann}
}

@article{Montgomery2001GeometricAT,
  title={Geometric approach to {G}oursat flags},
  author={R. Montgomery and M. Zhitomirskii},
  journal={Annales De L Institut Henri Poincare-analyse Non Lineaire},
  year={2001},
  volume={18},
  pages={459-493},
  url={https://api.semanticscholar.org/CorpusID:16669128}
}

@article{Bryant1993RigidityOI,
  title={Rigidity of integral curves of rank 2 distributions},
  author={R. L. Bryant and L. Hsu},
  journal={Inventiones mathematicae},
  year={1993},
  volume={114},
  pages={435-461}
}

@article{Hermes1984NilpotentBF,
  title={Nilpotent bases for distributions and control systems},
  author={H.  Hermes and A.  T. Lundell and D. P. Sullivan},
  journal={Journal of Differential Equations},
  year={1984},
  volume={55},
  pages={385-400},
  url={https://api.semanticscholar.org/CorpusID:54174072}
}

@article{Mormul_Pelleter_2flags,
author = {Mormul, P. and Pelletier, F.},
year = {2010},
month = {11},
pages = {},
title = {Special 2-flags in lengths not exceeding four: a study in strong nilpotency of distributions}
}

@article{Mormul2022MonsterTF,
  title={Monster towers from differential and algebraic viewpoints},
  author={Mormul, P.},
  journal={Journal of Singularities},
  year={2022},
volume={25},
pages = {331-347}
}

@article{Mormul2004MultidimensionalCP,
  title={Multi-dimensional {C}artan prolongation and special k-flags},
  author={Mormul, P.},
  journal={Banach Center Publications},
  year={2004},
  volume={65},
  pages={157-178},
}

@article{CASTRO12,
title = {A Monster Tower approach to {G}oursat multi-flags},
journal = {Differential Geometry and its Applications},
volume = {30},
number = {5},
pages = {405-427},
year = {2012},
issn = {0926-2245},
doi = {https://doi.org/10.1016/j.difgeo.2012.06.005},
author = {A. Castro and Wyatt, H.},
}

@article{CASTRO17_1,
author = {Castro, A. and Howard, W. and Shanbrom, C.},
year = {2017},
month = {06},
pages = {317-333},
title = {Complete spelling rules for the Monster tower over three-space},
volume = {9},
journal = {Journal of Geometric Mechanics},
doi = {10.3934/jgm.2017013}
}

@article{CASTRO17_2,
author = {Castro, A. and Colley, S. and Kennedy, G. and Shanbrom, C.},
year = {2016},
month = {06},
pages = {},
title = {A Coarse Stratification of the Monster Tower},
volume = {66},
journal = {The Michigan Mathematical Journal},
doi = {10.1307/mmj/1508896892}
}

@article{SHIBUYA2009793,
title = {Drapeau theorem for differential systems},
journal = {Differential Geometry and its Applications},
volume = {27},
number = {6},
pages = {793-808},
year = {2009},
issn = {0926-2245},
doi = {https://doi.org/10.1016/j.difgeo.2009.03.017},
author = {K. Shibuya and K. Yamaguchi}
}

@article{Adachi_Jiro,
author = {Adachi, J.},
year = {2010},
month = {12},
pages = {29-56},
title = {Global stability of special multi-flags},
volume = {179},
journal = {Israel Journal of Mathematics},
doi = {10.1007/s11856-010-0072-3}
}

@book{Montgomery2002,
  author    = {Montgomery, R.},
  title     = {A Tour of Subriemannian Geometries, Their Geodesics and Applications},
  year      = {2002},
  publisher = {American Mathematical Society},
  series    = {Mathematical Surveys and Monographs},
  volume    = {91},
  address   = {Providence, RI},
  isbn      = {978-0-8218-4165-5},
  doi       = {10.1090/surv/091}
}

@article{Sussmann1987,
author = {Sussmann, H. J.},
title = {A General Theorem on Local Controllability},
journal = {SIAM Journal on Control and Optimization},
volume = {25},
number = {1},
pages = {158-194},
year = {1987},
doi = {10.1137/0325011},
URL = { https://doi.org/10.1137/0325011},
eprint = {   https://doi.org/10.1137/0325011}
}

@article{Sussmann1983,
  author  = {Sussmann, Hector J.},
  title   = {Lie Brackets and Local Controllability: A Sufficient Condition for Scalar-Input Systems},
  journal = {SIAM Journal on Control and Optimization},
  volume  = {21},
  number  = {5},
  pages   = {686--713},
  year    = {1983},
  doi     = {10.1137/0321042}
}

\end{document}